\def\R{\mathbf{R}}
\def\W{\mathrm{W}}
\def\L{\mathrm{L}}
\def\C{\mathbf{C}}
\def\SS{\mathbf{S}}
\def\CC{\mathscr{C}}
\def\LL{\mathscr{L}}
\def\Sp{\mathrm{Sp}}
\def\iy{\infty}
\renewcommand\d{\mathrm{d}}
\def\veps{\varepsilon}
\newcommand\ad{\mathrm{ad}}
\newcommand\acos{\mathrm{acos}}
\renewcommand\vec{\overrightarrow}
\renewcommand\bar{\overline}
\renewcommand\tilde{\widetilde}
\def\resp{\emph{resp.}}
\def\eg{\emph{e.g.}}
\def\noi{\noindent}
\def\t{\!\:^t}
\newcommand{\frp}[2]{\frac{\partial #1}{\partial #2}}
\newcommand{\frpp}[2]{{\partial #1}/{\partial #2}}
\newtheorem{thrm}{Theorem}
\newtheorem{prpstn}{Proposition}
\newtheorem{lmm}{Lemma}
\newtheorem{crllr}{Corollary}
\newtheorem*{schlm}{Scholium}
\theoremstyle{definition}
\theoremstyle{remark}
\newtheorem{rmrk}{Remark}
\title{\bf $\mathbf{L^1}$-minimization for mechanical systems}
\author{Z.~Chen\thanks{Math.\ Dep., Univ.\ Paris-Sud \& CNRS and
Northwestern Polytechnical Univ. (\texttt{zheng.chen@math.u-psud.fr}).
Supported by Chinese Scholarship Council (grant no.\ 2013 0629 0024).}
\and J.-B.~Caillau\thanks{Math.\ Institute, Univ.\ Bourgogne \& CNRS/INRIA
(\texttt{jean-\-baptiste.caillau@u-bourgo gne.fr}). Part of this work was done during
a sabbatical leave at Lab.\ J.-L.\ Lions, Univ.\ Paris VI \& CNRS, whose hospitality
is gratefully acknowledged.}
\and Y.~Chitour\thanks{L2S-Supelec, Univ.\ Paris-Sud \& CNRS
(\texttt{yacine.chitour@lss.supelec.fr}).}}
\date{March 2015}
\begin{document}
\maketitle

\begin{abstract} \noi
Second order systems whose drift is defined by the gradient of a given potential are
considered, and minimization of the $\L^1$-norm of the control is addressed.
An analysis of the extremal flow emphasizes the role of singular
trajectories of order two \cite{robbins-1965a,zelikin-1994a};
the case of the two-body potential is treated in detail.
In $\L^1$-minimization, regular extremals are associated with controls whose norm is
bang-bang; in order to
assess their optimality properties, sufficient conditions are given for broken
extremals and related to the no-fold conditions of \cite{schattler-2002a}. An example of
numerical verification of these conditions is proposed on a problem coming from space
mechanics.\\

\noi\textbf{Keywords.} $\L^1$-minimization, second order mechanical systems,
order two singular trajectories, no-fold conditions for broken extremals, two-body problem\\ 

\noi\textbf{MSC classification.} 49K15, 70Q05\\
\end{abstract}

\pagestyle{myheadings}
\markboth{}{$L^1$-minimization for mechanical systems}

\section{Introduction} \label{s1}
This paper is concerned with the optimal control of mechanical systems of the
following form:
\[ \ddot{q}(t) + \nabla V(q(t)) = \frac{u(t)}{M(t)}\,,\quad
   \dot{M}(t) =-\beta |u(t)|, \]
where $q$ is valued in an open subset $Q$ of $\R^m$, $m \geq 2$,
on which the potential $V$ is defined.
The second equation describes the variation of the
mass, $M$, of the system when a control is used ($\beta$ is some nonnegative constant).
The finite dimensional norm is Euclidean,
\[ |u| = \sqrt{u_1^2+\cdots+u_m^2} \]
and a constraint on the control is assumed,
\begin{equation} \label{eq1.3}
  |u(t)| \leq \veps,\quad \veps > 0.
\end{equation}
Given boundary conditions in the $n$-dimensional state (phase) space $X:=TQ
\simeq Q\times\R^m$ ($n=2m$), the problem of interest is the minimization of
consumption, that is the maximization of the final mass $M(t_f)$ for a fixed final
time. Clearly, this amounts to minimizing the $\L^1$-norm of the control,
\begin{equation} \label{eq1.1}
  \int_0^{t_f} |u(t)|\,\d t \to \min.
\end{equation}
Up to some rescaling, there are actually two cases, $\beta=1$ or $\beta=0$. In the
second one, the mass is constant; though maximizing the final mass does not make sense
anymore, the Lagrange cost (\ref{eq1.1}) is still meaningful. Actually,
as propellant is only a limited fraction of the total mass, one can expect this
idealized constant mass model to capture the main features of the original problem.
We shall henceforth assume $\beta=0$, so the state reduces to $x:=(q,v)$ with
$v:=\dot{q}$.

In finite dimensions, $\ell^1$-minimization is well-known to generate sparse
solutions having a lot of zero components; this fact translates here into the
existence of subintervals of time where the control vanishes, as is clear when
applying the maximum principle (see \S\ref{s2}). This intuitively goes
along well with the idea of minimizing consumption: There are privileged values of the
state where the control is more efficient and should be switched on (\emph{burn} arcs),
while there are
some others where it should be switched off (\emph{cost} arcs).
(See also \cite{berret-2008a} for a different kind of interpretation in a biological
setting, again with $\L^1$-minimization.)
The resulting sparsity of the solution is then
tuned by the ratio of the fixed final time over the minimum time associated with the
boundary conditions: While a simple consequence of the form of the dynamics (and of
the ball constraint on the control) is that the min.\ time control norm is constant
and maximum everywhere for the constant mass model,\footnote{See, \eg,
\cite{cocv-2001}; this fact remains true for time minimization if the mass is
varied provided the mass at final time is left free [\emph{ibid}].}
the extra amount of time available allows for some optimization that results in the
existence of subarcs of the trajectory with zero control.
(See Proposition~\ref{prop2.1}, in this respect.)
A salient peculiarity of the infinite dimensional setting is the existence of
subarcs with intermediate value of the norm of the control, namely singular arcs.
This was analyzed in the seminal paper of Robbins \cite{robbins-1965a} in the case of
the two-body potential, providing yet another example of the fruitful exchanges between
space mechanics and optimal control in the early years of both disciplines. The
consequence of these singular arcs being of order two was further realized by Marchal
who studied chattering in \cite{marchal-1973a}; this example comes probably second
after the historical one of Fuller \cite{fuller-1964a}
and has been thoroughly
investigated by Zelikin and Borisov in \cite{zelikin-1994a,zelikin-2003a}.

A typical example of second order controlled system is the restricted
three-body problem \cite{sicon-2012} where, in complex notation ($\R^2 \simeq \C$),
\[ V_\mu(t,q) := -\frac{1-\mu}{|q+\mu e^{it}|}-\frac{\mu}{|q-(1-\mu)e^{it}|}\cdot \]
In this case, $\mu$ is the ratio of the masses of the two primary celestial bodies,
in circular motion around their common center of mass. The controlled third body is a
spacecraft gravitating in the potential generated by the two primaries, but not
influencing their motion. When $\mu=0$, the potential is autonomous and one retrieves
the standard controlled two-body problem. The study of "continuous" (as opposed to
impulsive) strategies for the control began in the 60's; see, \eg, the work of Lawden
\cite{lawden-1961a}, or Beletsky's book \cite{beletsky-1999a}
where the importance of low thrust (small $\veps$ in (\ref{eq1.3})) to spiral
out from a given initial orbit was foreseen. There is currently a strong interest for
low-thrust missions with, \eg, the Lisa Pathfinder \cite{lisa}
one of ESA\footnote{European Space Agency.} towards the $L_1$ Lagrange
point of the Sun-Earth system, or BepiColombo \cite{bepicolombo} mission of ESA and
JAXA\footnote{Japan Aerospace Exploration Agency.} to Mercury.

An important issue in optimal control is the ability to verify sufficient optimality
conditions. In $\L^1$-minimization, the first candidates for optimality are controls
whose norm is bang-bang, switching from zero to the bound prescribed by (\ref{eq1.3})
(more complicated situations including singular controls). Second order conditions in the
bang-bang case have received quite an extensive treatment; references include
the paper of Sarychev \cite{sarychev-1997a},
followed by \cite{agrachev-2002a} and
\cite{maurer-2004a,osmolovskii-2005a,osmolovskii-2007a}. On a similar line, the
stronger notion of state optimality was introduced in \cite{poggiolini-2004a} for free
final time. More recently, a regularization procedure has been developed in
\cite{trelat-2010b} for single-input systems. 
These papers consider controls valued in polyhedra; the standing
assumptions allow to define a finite dimensional accessory optimization problem in the
switching times only. Then, checking a second order sufficient condition on this
auxiliary problem turns to be sufficient to ensure strong local optimality of the
bang-bang controls. A byproduct of the analysis is that conjugate times, where local
optimality is lost, are switching times. A different approach, based on
Hamilton-Jacobi-Bellman and the method of characteristics in optimal control, has been 
proposed by Noble and Sch\"attler in \cite{schattler-2002a}. Their results encompass
the case of broken extremals
with conjugate points occuring at \emph{or} between switching times. We provide a similar
analysis by requiring some generalized (with respect to the smooth case) disconjugacy
condition on the Jacobi fields, and using instead a Hamiltonian point of view
reminiscent of \cite{ekeland-1977a,kupka-1987a}. Treating the case of such broken
extremals is crucial for $\L^1$-minimization: As the finite dimensional norm
of the control involved in the constraint (\ref{eq1.3}) and in the cost (\ref{eq1.1})
is an $\ell^2$-norm, the control is valued in the Euclidean
ball of $\R^m$, not a polyhedron if $m>1$. When $m=1$, the situation is degenerate,
and one can for instance set $u=u_+-u_-$, with $u_+,u_-\geq 0$. 
(This approach also works for $m>1$ when an $\ell^1$ or $\ell^\iy$-norm is used for the
values of the control; see, \eg, \cite{vossen-2006a}.) When $m>1$, it is clear using
spherical coordinates that although the norm of
the control might be bang-bang, the variations of the control component on $\SS^{m-1}$
preclude the reduction to a finite dimensional optimization problem.
(The same remark holds true for any $\ell^p$-norm of the control values with
$1<p<\iy$.)
An example of conjugacy occuring between switching times is provided in \S\ref{s4}.

The paper is organized as follows. In section~\ref{s2}, the extremal lifts of
$\L^1$-minimizing trajectories are studied for an arbitrary potential in the constant
mass case; the properties of the flow are encoded by the Poisson structure defined by
two Hamiltonians. In section~\ref{s3}, sufficient conditions for strong local
optimality of broken extremals
with regular switching points are given in terms of jumps on the Jacobi fields; these
conditions are related to the no-fold condition of
\cite{schattler-2002a}. In section~\ref{s4} some numerical results illustrating the
verification of these sufficient conditions for $\L^1$-minimizing trajectories are
given. The two-body mechanical potential is considered, completing the study of
Gergaud and Haberkorn \cite{gergaud-2006a} where the first numerical computation of fuel
minimizing controls with hundreds of switchings (for low thrust) was performed using a
clever combination of shooting and homotopy techniques. (See also \cite{oberle-1977a}
in the case of a few switchings.)
The classical construction of 
fields of extremals in the smooth case is reviewed in an appendix.

\section{Singularity analysis of the extremal flow} \label{s2}
By renormalizing the time and the potential, one can assume $\veps=1$ in
(\ref{eq1.3}), so we consider the $\L^1$-minimum control of
\[ \ddot{q}(t)+\nabla V(q(t)) = u(t),\quad |u(t)| \leq 1, \]
with $x(t)=(q(t),v(t)) \in X=TQ$ ($v(t)=\dot{q}(t)$),
$Q$ an open subset of $\R^m$ ($m \geq 2$),
and make the following assumptions on the boundary conditions:
\[ x(0)=x_0 \quad \text{and} \quad x(t_f) \in X_f \subset X \]
where (i) $x_0$ does not belong to the terminal submanifold $X_f$, (ii) $X_f$ is
invariant wrt.\ the flow of the drift,\footnote{This assumption can be weakened; it is
only used to ensure that a time minimizing control extended by zero beyond the
min.\ time is admissible. (See Lemma~\ref{lem2.1}.)}
\[ F_0(q,v) = v\frp{}{q}-\nabla V(q)\frp{}{v}\,, \]
and (iii) the fixed final time $t_f$ is supposed strictly greater than the minimum
time $\bar{t}_f(x_0,X_f)<\iy$ of the problem.
As the cost is not differentiable for $u=0$, rather than using a non-smooth maximum
principle (compare, \eg, \cite{berret-2008a}) we make a simple desingularization: In
spherical coordinates, $u=\rho w$ where $\rho \in [0,1]$ and $w \in \SS^{m-1}$; the
change of coordinates amounts to adding an $\SS^{m-1}$ fiber above the singularity
$u=0$ of the cost. In these
coordinates, the dynamics write
\[ \dot{x}(t) = F_0(x(t))+\rho(t)\sum_{i=1}^m w_i(t) F_i(x(t)) \]
with canonical $F_i=\frpp{}{v_i}$, $i=1,\dots,m$, and the criterion is linearized:
\[ \int_0^{t_f} \rho(t)\,\d t \to \min. \]
The Hamiltonian of the problem is
\[ H(x,p,\rho,w) = p^0\rho+H_0(x,p)+\rho\sum_{i=1}^m w_i \psi_i(x,p) \]
where $H_0(x,p):=pF_0(x)$ and the
$\psi_i(x,p):=pF_i(x)$ are the Hamiltonian lifts of the $F_i$, $i=1,\dots,m$.
Readily, $H \leq H_0+\rho H_1$ with
\[ H_1 := p^0+\sqrt{\sum_{i=1}^m \psi_i^2}\,, \]
and the equality can always be achieved for some $w \in \SS^{m-1}$: $w=\psi/|\psi|$
whenever $\psi:=(\psi_1,\dots,\psi_m)$ is not zero, any $w$ on the sphere otherwise.
By virtue of the maximum principle, if $(\rho,w)$ is a measurable minimizing control
then the associated trajectory is the projection of an integral curve $(x,p):[0,t_f]
\to T^*X$ of $H_0+\rho H_1$ such that, a.e.,
\begin{equation} \label{eq2.1}
  H_0(x(t),p(t))+\rho(t)H_1(x(t),p(t)) =
   \max_{r \in [0,1]} H_0(x(t),p(t))+rH_1(x(t),p(t)).
\end{equation}
Moreover, the constant $p^0$ is nonpositive and $(p^0,p) \neq (0,0)$.
Either $p^0=0$ (abnormal case), or $p^0$ can be set to $-1$ by homogeneity (normal case).

\begin{prpstn}[Gergaud \emph{et al.}\ \cite{gergaud-2006a}] \label{prop2.1}
There are no abnormal extremals.
\end{prpstn}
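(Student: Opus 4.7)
I plan to argue by contradiction. Suppose an abnormal extremal $(x,p,\rho,w)$ exists on $[0,t_f]$, so $p^0=0$ and $p\not\equiv 0$. I will first show that the maximum principle forces its $\L^1$-cost to equal exactly $t_f$, and then exhibit a strictly cheaper admissible competitor built from a time-minimum arc followed by a coast arc, reaching a contradiction with optimality of the abnormal candidate.

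\textbf{Structural step.} With $p^0=0$ and $F_i=\partial/\partial v_i$, the switching function reduces to $H_1=|p_v|\geq 0$, so (\ref{eq2.1}) prescribes $\rho(t)=1$ whenever $p_v(t)\neq 0$. Because the $F_i$ are $v$-translation-invariant, neither $\rho$ nor $w$ enters the adjoint equations, which read
\[ \dot p_v=-p_q,\qquad \dot p_q=p_v\,\nabla^2V(q(t)), \]
a time-varying \emph{linear} system in $p$. I now rule out the possibility that $Z:=\{t\in[0,t_f]:p_v(t)=0\}$ has positive Lebesgue measure: were this the case, the Lebesgue density theorem would produce $t_0\in Z$ at which $Z$ has density $1$. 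Since $t_0\in Z$ we have $p_v(t_0)=0$; approaching $t_0$ along a sequence in $Z\setminus\{t_0\}$ in the difference quotient for $p_v$ (which is $C^1$ since $\dot p_v=-p_q$ is continuous) yields $\dot p_v(t_0)=0$, hence $p_q(t_0)=0$. Thus $p(t_0)=0$, and linear uniqueness forces $p\equiv 0$, contradicting $(p^0,p)\neq(0,0)$. Consequently $\rho\equiv 1$ almost everywhere, so the cost achieved by the putative abnormal extremal equals $t_f$.

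\textbf{Competitor and conclusion.} By assumption (iii), $\bar t_f<t_f$. Standard analysis of minimum time under a Euclidean-ball constraint (referred to in \S\ref{s1}) provides a control $\bar u$ on $[0,\bar t_f]$ of unit norm steering $x_0$ into $X_f$; by assumption (ii), extending $\bar u$ by $0$ on $[\bar t_f,t_f]$ is admissible, keeps the trajectory in $X_f$ and produces an $\L^1$-cost equal to $\bar t_f<t_f$. This strictly beats the abnormal candidate and closes the contradiction. The main obstacle of the scheme is the density-point step: excluding a Cantor-type zero set of $p_v$ cannot be done by continuity alone, and relies on both the linear-in-$p$ structure of the adjoint system---a feature specific to the mechanical geometry---and the nontriviality of the Pontryagin multiplier.
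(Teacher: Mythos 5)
Your proposal is correct and follows essentially the same route as the paper: force $\rho\equiv 1$ a.e.\ in the abnormal case (hence cost $t_f$), then beat it with a minimum-time control extended by zero, using the drift-invariance of $X_f$. The only (harmless) variation is in the first step, where the paper shows via Lemma~\ref{lem2.1} that $\psi=p_v$ has isolated zeros --- using that $\psi(t)=\dot\psi(t)=0$ forces $p(t)=0$ by the rank condition on $\{F_i,[F_0,F_i]\}$ --- whereas you reach the weaker but sufficient conclusion that $\{p_v=0\}$ is Lebesgue-null by a density-point argument resting on the same linear-adjoint structure.
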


\begin{lmm} \label{lem2.1}
The function $\psi$ evaluated along an extremal has only isolated zeros.
\end{lmm}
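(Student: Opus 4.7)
The plan is to derive a second-order linear ODE satisfied by $\psi$ along any extremal, use Cauchy--Lipschitz uniqueness to promote an accumulation of zeros to an identically vanishing $\psi$, and then contradict this via the non-triviality of the adjoint together with hypothesis (ii).

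First, $\psi=(p_{v_1},\dots,p_{v_m})$ depends on $p$ only through the $p_v$-component and not at all on $x$, so the control-dependent piece $\rho\sum_i w_i\psi_i$ of $H$ contributes nothing to $\partial_v H$ or $\partial_q H$. The adjoint equations therefore reduce, independently of the control, to $\dot p_v=-p_q$ and $\dot p_q=\nabla^2 V(q)\,p_v$, from which
\[ \ddot\psi+\nabla^2 V(q(t))\,\psi=0, \quad t\in[0,t_f]. \]
Since $V$ is smooth and $q$ is continuous, this linear system has continuous coefficients and its solutions are uniquely determined by $(\psi(t_0),\dot\psi(t_0))$ at any single point $t_0$.

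Next, suppose $\psi$ admits a non-isolated zero, i.e.\ a sequence $t_n\to t_0$ with $t_n\neq t_0$ and $\psi(t_n)=0$. Continuity gives $\psi(t_0)=0$; applying Rolle's theorem componentwise between $t_n$ and $t_0$ produces $s_n\to t_0$ with $\dot\psi(s_n)=0$, and continuity of $\dot\psi=-p_q$ then gives $\dot\psi(t_0)=0$. Uniqueness for the ODE above forces $\psi\equiv 0$ on $[0,t_f]$, whence $p_v=\psi\equiv 0$ and $p_q=-\dot\psi\equiv 0$, so $p\equiv 0$.

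It remains to rule this out. Non-triviality $(p^0,p)\neq(0,0)$ forces $p^0=-1$, and the maximization condition (\ref{eq2.1}) then collapses to $\max_{\rho\in[0,1]}(-\rho)=0$, attained only at $\rho=0$, so $u\equiv 0$ a.e.\ and the extremal trajectory coincides with the drift orbit through $x_0$. Combined with $x_0\notin X_f$ and invariance of $X_f$ under the drift, this orbit cannot meet $X_f$, contradicting $x(t_f)\in X_f$. I expect the last step to be the most delicate one: the argument requires reading hypothesis (ii) as bi-invariance (equivalently, $X_f$ is a union of drift orbits), so that an orbit starting outside $X_f$ cannot enter it. This is the natural reading in the applications of \S\ref{s4} (e.g.\ an orbit transfer in the two-body problem), but the paper's footnote indicates that only forward invariance is formally required elsewhere, so this interpretation should be made explicit to close the proof.
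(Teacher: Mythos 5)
Your proof is correct and essentially the same as the paper's: the equation $\ddot\psi+\nabla^2 V(q(t))\,\psi=0$ is just the paper's observation that $\dot\psi_i=p[F_0,F_i]=-p_{q_i}$ combined with the linearity of the adjoint system, and both arguments reduce a non-isolated zero to $\psi(t_0)=\dot\psi(t_0)=0$, hence $p\equiv 0$, hence $p^0<0$ and $\rho=0$ a.e., contradicting the boundary conditions. The bi-invariance reading of hypothesis (ii) that you flag is indeed needed, but it is equally needed for the paper's own one-line conclusion (``this is impossible because $x_0\notin X_f$''), so it is not a gap specific to your argument.
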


\begin{proof} 
As a function of time when evaluated along an extremal, $\psi$ is absolutely continous
and, a.e.\ on $[0,t_f]$,
\[ \dot{\psi}_i(t) = p(t)[F_0,F_i](x(t)),\quad i=1,\dots,m. \]
As a result, $\psi$ is a $\CC^1$ function of time and, if $\psi(t)=0$, then
$\dot{\psi}(t) \neq 0$; indeed, the rank of
$\{F_1,\dots,F_m,[F_0,F_1],\dots,[F_0,F_m]\}$ is maximum everywhere, so $p(t)$ would
otherwise be zero.
Since $p$ is solution of a linear ode, this would imply that it vanishes
identically; necessarily $p^0<0$, so $\rho$ would also be zero a.e.\ because of the
maximization condition (\ref{eq2.1}).
This is impossible because $x_0 \notin X_f$.
\end{proof}

\begin{proof}[Proof of the Proposition] By contradiction: Assume $p^0=0$; as $\psi$ has only isolated zeros
according to the previous lemma, $\rho=1$ a.e.\ by maximization. The resulting cost
is equal to $t_f$.
Now, the target
submanifold $X_f$ is invariant by the drift, so any minimizing control extended by
$u=0$ on $[\bar{t}_f,t_f]$ (where $\bar{t}_f$ denotes the min.\ time)
remains admissible. It has a cost equal to $\bar{t}_f<t_f$, hence the contradiction.
\end{proof}

\noi We set $p^0=-1$, so $H_1=|\psi|-1$.
In contrast with the minimum time case, the singularity
$\psi=0$ does not play any role in $\L^1$-minimization. In the neighbourhood of $t$
such that $\psi(t)=0$, $H_1$ is negative, so $\rho=0$. Locally, the control vanishes
and the extremal is smooth. The only effect of the singularity is a discontinuity in
the $\SS^{m-1}$ fiber over $u=0$ in which $w(t+)=-w(t-)$ (see \cite{cocv-2001}).

The important remaining singularity is $H_1=0$. As opposed to the standard
single-input case, $H_1$ is not the lift of a vector field on $X$; the properties of
the extremal flow depend on $H_0$, $H_1$, and their Poisson brackets. (See also
\S\ref{s4} for the consequences in terms of second order conditions.)
We denote by $H_{01}$ the bracket $\{H_0,H_1\}$, and so forth. The
following result is standard (see \cite{bonnard-2003a}, \eg) and accounts for the
intertwining of arcs along which $\rho=0$ (labeled $\gamma_0$) with arcs such that
$\rho=1$ (labeled $\gamma_+$). 

\begin{prpstn} \label{prop2.5}
In the neighbourhood of $z_0$ in $\{H_1=0\}$ such that $H_{01}(z_0)
\neq 0$, every extremal is locally bang-bang of the form $\gamma_0\gamma_+$ or
$\gamma_+\gamma_0$, depending on the sign of $H_{01}(z_0)$.
\end{prpstn}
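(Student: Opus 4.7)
The plan is to treat $H_1$ as the switching function and to exploit the simple derivative identity
\[ \dot{H}_1 = \{H_0 + \rho H_1, H_1\} = \{H_0, H_1\} = H_{01}, \]
valid along any integral curve of $H_0 + \rho H_1$ since $\{H_1, H_1\} = 0$. The key point is that this identity holds almost everywhere irrespective of the measurable choice of $\rho$, so no \emph{a priori} information on the control is needed to control the evolution of $H_1$.

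First I would note that on the level set $\{H_1 = 0\}$ one has $|\psi| = 1 \neq 0$, so $H_1$ and hence $H_{01}$ are smooth in a neighbourhood of $z_0$. By continuity and the assumption $H_{01}(z_0) \neq 0$, there is a neighbourhood $\mathcal{U}$ of $z_0$ in $T^*X$ on which $H_{01}$ keeps the constant sign of $H_{01}(z_0)$. I would then shrink things enough to pick a time interval $[t_0 - \eta, t_0 + \eta]$ on which any extremal crossing $\{H_1 = 0\}$ near $z_0$ at time $t_0$ stays in $\mathcal{U}$.

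On such an interval the absolutely continuous function $t \mapsto H_1(x(t), p(t))$ has a derivative of constant sign, so it is strictly monotonic and vanishes at the single instant $t_0$. The maximization condition (\ref{eq2.1}) then forces $\rho(t) = 1$ wherever $H_1 > 0$ and $\rho(t) = 0$ wherever $H_1 < 0$. For $H_{01}(z_0) > 0$, $H_1$ is increasing, hence negative before $t_0$ and positive afterwards, producing a $\gamma_0 \gamma_+$ concatenation; the reversed sign gives $\gamma_+ \gamma_0$ by the symmetric argument.

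The only mild point of care is the measurable nature of $\rho$: one must argue directly on the extremal via the almost-everywhere identity above, rather than invoking a smooth flow and a standard ODE comparison. Beyond this, the whole proof is driven by the sign analysis of a single scalar function, so no serious obstacle is expected.
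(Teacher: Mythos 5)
Your proof is correct and follows essentially the same route as the paper: both rest on the identity $\dot H_1=\{H_0+\rho H_1,H_1\}=H_{01}$ holding a.e.\ independently of the measurable control $\rho$, the resulting strict monotonicity of $H_1$ along the extremal near $z_0$, and the maximization condition to read off $\rho=0$ or $\rho=1$ on either side of the switching time. Your write-up is slightly more explicit about the measurability issue and about all nearby extremals (crossing or not), but the argument is the same.
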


\begin{proof} As $H_{01}(z_0) \neq 0$, $H_1$ must be a submersion at $z_0$, so
$\{H_1=0\}$ is locally a codimension one submanifold splitting $T^*X$ into $\{H_1<0\}$
and $\{H_1>0\}$.
Evaluated along an extremal, $H_1$ is a $\CC^1$ function of time since
\[ \dot{H}_1(t) = \{H_0+\rho(t)H_1,H_1\} = H_{01}(t). \]
Through $z_0$ passes only one extremal, and it is of the form $\gamma_0\gamma_+$ if
$H_{01}(z_0)>0$ (\resp{} $\gamma_+\gamma_0$ if $H_{01}(z_0)<0$). The bracket condition 
allows to use the implicit function theorem to prove that neighbouring extremals also
cross $\{H_1=0\}$ transversally.
\end{proof}

\noi Such switching points are termed \emph{regular} and are studied in \S\ref{s3} from
the point of view of second order optimality conditions. Besides the occurence of
$\gamma_0$ arcs resulting in the parsimony of solutions as explained in the
introduction, the peculiarity of the control setting is the existence of singular arcs
along which $H_1$ vanishes identically. On such arcs, $\rho$ may take arbitrary values
in $[0,1]$.

\begin{thrm}[Robbins \cite{robbins-1965a}]
Singular extremals are at least of order two, and minimizing singulars of order two are
contained in
\[ \{ z=(q,v,p_q,p_v) \in T^*X\ |\ V''(q)p_v^2 \geq 0,\ V'''(q)p_v^3>0 \}. \]
\end{thrm}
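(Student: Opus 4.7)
My plan is to compute successive time derivatives of $H_1$ along the extremal flow of $H_0+\rho H_1$ in order to locate the first order at which the control $\rho$ enters; the sign constraint in the conclusion will then come from the generalized Legendre--Clebsch (Kelley--Robbins) condition applied to a minimizer of Pontryagin's maximized Hamiltonian. The key structural observation is that, in the coordinates $(q,v,p_q,p_v)$ on $T^*(TQ)$, the function $H_1=|p_v|-1$ depends only on the fibre variable $p_v$, while the control fields $F_i=\partial/\partial v_i$ are vertical and mutually commute. A direct check gives $H_{01}=\{H_0,H_1\}$ proportional to $(p_q\cdot p_v)/|p_v|$, again a function of $(p_q,p_v)$ alone; consequently $H_{101}=\{H_1,H_{01}\}\equiv 0$, so $\ddot H_1=H_{001}$ is independent of $\rho$, and every singular extremal has order at least two.

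Pushing one step further, an explicit formula for $H_{001}$ shows that it depends on $q$ through $V''(q)$ but not on $v$; combined with the fact that only the $p_v$-derivatives of $H_1$ are non-zero, this yields $H_{1001}=\{H_1,H_{001}\}=0$, so that $H_1^{(3)}=H_{0001}$ is $\rho$-free as well. On the singular arc, the chain $H_1=0$, $H_{01}=0$, $H_{001}=0$ translates into $|p_v|=1$, $p_q\cdot p_v=0$, and, via the explicit expression for $H_{001}$, into $V''(q)\,p_v^2=|p_q|^2\geq 0$, which is the first half of the statement. To obtain the second, one writes $H_1^{(4)}=H_{00001}+\rho\,H_{10001}$; Jacobi's identity together with $H_{1001}=0$ gives $H_{10001}=\{H_{001},H_{01}\}$, and since only the $V''(q)\,p_v^2$ summand in $H_{001}$ carries $q$-dependence, a short contraction with $\partial_{p_q}H_{01}\propto p_v/|p_v|$ produces $H_{10001}$ proportional to $V'''(q)\,p_v^3/|p_v|^2$; on the singular arc this is a non-zero multiple of $V'''(q)\,p_v^3$.

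The Kelley--Robbins necessary condition for a minimizing singular extremal of order two of Pontryagin's maximized Hamiltonian imposes a definite sign on $H_{10001}$ along the arc; combined with the formula above, it yields the strict inequality $V'''(q)\,p_v^3>0$. The main practical obstacle is the sign bookkeeping: the Poisson convention of the paper ($\dot f=\{H,f\}$) must be kept consistent throughout the bracket cascade, and the direction of the generalized Legendre--Clebsch condition for a maximizer of $H$ (equivalently, a minimizer of the $\L^1$ cost) must be correctly identified in order to recover the stated sign. Conceptually everything else reduces to the vertical/fibre structure of $H_1$ and to the fact that, by Jacobi, the first source of a $\rho$ term in the successive derivatives is the spatial derivative of $V''$, i.e.\ the tensor $V'''$.
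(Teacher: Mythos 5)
Your proposal follows essentially the same route as the paper: the same cascade of Poisson brackets along the flow of $H_0+\rho H_1$, the same key identities $H_{101}=H_{1001}=0$ (which you justify by tracking the $(q,v)$-independence of the successive brackets rather than by the paper's combination of the Jacobi identity with a factorization lemma for $\beta=1/|p_v|$), the same reading of $H_{001}=0$ as $V''(q)p_v^2=|p_q|^2\geq 0$, and the same appeal to the generalized Legendre--Clebsch condition for the sign of $H_{10001}\propto -V'''(q)p_v^3$. The sketch is correct; just make explicit that the strictness of $V'''(q)p_v^3>0$ comes from combining the non-strict Legendre sign $H_{10001}\leq 0$ with the order-two hypothesis $H_{10001}\neq 0$, exactly as in the paper.
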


\begin{proof} One has $H_0=(p_q|v)-(p_v|\nabla V(q))$, and $H_1=0$ along a
singular so,
\[ 0 = H_{01} = -\frac{1}{|p_v|}(p_q|p_v) \]
along the arc.

\begin{lmm} On $T^*X$, $H_{101}=H_{1001}=0$.
\end{lmm}

\begin{proof} Computing,
\[ H_{101} = \{H_1,H_{01}\} = \{|p_v|-1,-\frac{1}{|p_v|}(p_q|p_v)\} = 0, \]
and it is standard that 
\begin{eqnarray*}
  H_{1001} &=& \{H_1,\{H_0,H_{01}\}\}\\
  &=& \{-H_{01},H_{01}\}+\{H_0,H_{101}\}\\
  &=& 0
\end{eqnarray*}
using Leibniz rule.
\end{proof}

\noi Then $0=\dot{H}_{01}=H_{001}+\rho H_{101}$ implies $H_{001}=0$ along a singular arc.
Iterating, $0=\dot{H}_{001}=H_{0001}+\rho H_{1001}$ so, by the previous lemma again,
$0=H_{0001}$. Eventually, $0=\dot{H}_{0001}=H_{00001}+\rho H_{10001}$. Set $f:=H_0$,
$g:=H_1$, $h:=-(p_q|p_v)$, so that $H_{01}=\beta h$ with $\beta=1/|p_v|$. Using
Leibniz rule, the following is clear.

\begin{lmm}
\[ (\ad^k f)(\beta h)|_{(\ad^i f)h=0,\ 0 \leq i < k} = \beta(\ad^k f)h \]
\[ \{g,(\ad^k f)(\beta h)\}|_{(\ad^i f)h=0,\ 0 \leq i \leq k} = \beta\{g,(\ad^k f)h\} \]
\end{lmm}

\noi Computing, one obtains
\[ (\ad f)h =-V''(q)p_v^2+|p_q|^2, \]
so $0=H_{001}$ implies $V''(q)p_v^2 \geq 0$, and
\[ (\ad^2 f)h =-V'''(q)(v,p_v,p_v)+4V''(q)(p_q,p_v), \]
\[ \{g,(\ad^2 f)h\} =-\frac{1}{|p_v|}V'''(q)p_v^3. \]
Through a point $z_0$ such that the last quantity does not vanish, there passes a
so-called order two singular extremal that is an integral curve of the Hamiltonian
$H_s:=H_0+\rho_s H_1$ with the dynamic feedback
\[ \rho_s :=-\frac{H_{00001}}{H_{10001}} \cdot \]
Along such a minimizing singular arc, the generalized Legrendre condition must
hold, $H_{10001} \leq 0$, so $V'''(q)p_v^3>0$.
\end{proof}

\begin{crllr} In the case of the two-body potential $V(q)=-1/|q|$ $(q \neq 0)$,
along an order two singular arc
one has either $\alpha \in (\pi/2,\alpha_0]$ or $\alpha \in [-\alpha_0,-\pi/2)$ 
where $\alpha$ is the angle of the control with the radial direction, and
$\alpha_0=\acos(1/\sqrt{3})$.
\end{crllr}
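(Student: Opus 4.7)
The plan is to specialize the two pointwise conditions $V''(q)p_v^2 \geq 0$ and $V'''(q)p_v^3 > 0$ from the preceding theorem to the Kepler potential $V(q)=-1/|q|$, and then to re-express them as inequalities on $c:=\cos\alpha$. Along a singular arc the maximization condition (\ref{eq2.1}) forces the control direction to be $\psi/|\psi|=p_v/|p_v|$, so $\alpha$ equals the angle between $p_v$ and the position vector $q$; writing $r:=|q|$, this amounts to $(q|p_v)=r|p_v|\,c$.

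First, a direct differentiation of $V=-1/r$ gives
\[ V''(q)p_v^2 = \frac{|p_v|^2}{r^3}-\frac{3(q|p_v)^2}{r^5}, \quad
   V'''(q)p_v^3 = -\frac{9|p_v|^2(q|p_v)}{r^5}+\frac{15(q|p_v)^3}{r^7}. \]
Substituting $(q|p_v)=r|p_v|\,c$ and factoring out the positive quantities $|p_v|^2/r^3$ and $3|p_v|^3/r^4$ respectively, the two inequalities of the theorem reduce to
\[ 1-3c^2 \geq 0 \quad \text{and} \quad c\,(5c^2-3)>0. \]

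Next, I would solve this elementary system in $c \in [-1,1]$. The first inequality forces $|c|\leq 1/\sqrt{3}$; the second splits into the two branches $c>0$ with $c^2>3/5$, and $c<0$ with $c^2<3/5$. The positive branch is incompatible with the first condition since $\sqrt{3/5}>1/\sqrt{3}$, so only $c\in[-1/\sqrt{3},0)$ survives, the strict inequality at $0$ coming from the strict form of the generalized Legendre-Clebsch condition. Translating this interval on $c=\cos\alpha$ back to the angle then yields the announced range, the two symmetric sign branches reflecting the two possible orientations of $p_v$ in the plane transverse to $q$. The only mildly delicate point is confirming that the positive branch of the cubic inequality is genuinely cut off by the second-order condition rather than being spurious; everything else is routine polynomial manipulation.
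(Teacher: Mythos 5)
Your proposal is correct and follows essentially the same route as the paper: specialize $V''(q)p_v^2\geq 0$ and $V'''(q)p_v^3>0$ to the Kepler potential, rewrite both in terms of $c=\cos\alpha$ (the paper normalizes $|p_v|=1$ from $H_1=0$ rather than factoring out $|p_v|$, a cosmetic difference), and observe that the positive branch of $c(5c^2-3)>0$ is excluded by $c^2\leq 1/3<3/5$, leaving $c\in[-1/\sqrt{3},0)$.
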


\begin{proof}
One has
\[ V'(q)p_v = \frac{(p_v|q)}{|q|^3}\,,\quad
   V''(q)p_v^2 = \frac{|p_v|^2}{|q|^3}-\frac{3(p_v|q)^2}{|q|^5}\,, \]
\[ V'''(q)p_v^3 =-\frac{9(p_v|q)|p_v|^2}{|q|^5}+\frac{15(p_v|q)^3}{|q|^7}\cdot \]
On $Q=\R^m\backslash \{0\}$, $\SS^{m-1} \ni w=p_v$ since $|p_v|=1$ along a singular arc,
so $\cos\alpha=(p_v|q)/|q|$. The condition $V''(q)p_v^2 \geq 0$ reads
$1-3\cos^2\alpha \geq 0$, and $V'''(q)p_v^3 > 0$ is fulfilled if and only if
\[ \cos\alpha(3-5\cos^2\alpha) < 0 \]
that is provided $\cos\alpha<0$ in addition to the previous condition. Hence the two
cases (in exclusion since the singular control is smooth) for the angle.
\end{proof}

\noi The existence of order two singular arcs in the two-body case results in the
well-known Fuller or chattering phenomenon \cite{marchal-1973a,zelikin-1994a}. The same
phenomenon actually persists for the restricted three body problem as is explained
in \cite{zelikin-2003a}. Altough these singular trajectories are contained in some
submanifold
of the cotangent space with codimension $>1$, their existence rules out the
possibility to bound globally the number of switchings of regular extremals described
by Proposition~\ref{prop2.5}. The next section is devoted to giving sufficient
optimality conditions for such bang-bang extremals.

\section{Sufficient conditions for extremals with regular switchings} \label{s3}
Let $X$ be an open subset of $\R^n$, $U$ a nonempty subset of $\R^m$, $f$ a
vector field on $X$ parameterized by $u \in U$, and $f^0 : X\times U \to \R$ a cost
function, all smooth.
Consider the following minimization problem with fixed final time $t_f$: Find
$(x,u):[0,t_f] \to X\times U$, $x$ absolutely continous, $u$ measurable and bounded,
such that
\[ \dot{x}(t)=f(x(t),u(t)),\quad t \in [0,t_f] \text{ (a.e)}, \]
\[ x(0)=x_0,\quad x(t_f)=x_f, \]
and such that
\[ \int_0^{t_f} f^0(x(t),u(t))\,\d t \]
is minimized.
The maximum principle asserts that, if $(\bar{x},\bar{u})$ is such a pair,
there exists an absolutely continuous lift $(\bar{x},\bar{p}):[0,t_f] \to T^*X$ and
a nonpositive scalar $\bar{p}^0$, $(\bar{p}^0,\bar{p}) \neq (0,0)$, such that a.e.\ on
$[0,t_f]$
\[ \dot{\bar{x}}(t) = \frp{H}{p}(\bar{x}(t),\bar{p}(t),\bar{u}(t))\,,\quad
   \dot{\bar{p}}(t) =-\frp{H}{x}(\bar{x}(t),\bar{p}(t),\bar{u}(t))\,, \]
and
\[ H(\bar{x}(t),\bar{p}(t),\bar{u}(t)) = \max_U H(\bar{x}(t),\bar{p}(t),\cdot) \]
where $H : T^*X \times U \to \R$ is the Hamiltonian of the problem,
\[ H(x,p,u) := p^0f^0(x,u)+pf(x,u). \]
We first assume that
\begin{itemize}
\item[(A0)] The reference extremal is normal.
\end{itemize}
Accordingly, $p^0$ can be set to $-1$.
Let $H_1,H_2: T^*X \to \R$ be two smooth functions, and denote $\Sigma:=\{H_1=H_2\}$,
$\Omega_1:=\{H_1>H_2\}$ ($\Omega_2:=\{H_2>H_1\}$, \resp)
We assume that
\begin{equation} \label{eq1}
  \max_U H(z,\cdot) = H_i,\quad z \in \Omega_i,\quad i=1,2,
\end{equation}
and follow the point of view of \cite{ekeland-1977a} that these two Hamiltonians are
competing Hamiltonians.
Let $(\bar{x},\bar{p},\bar{u})$ be a reference extremal having only one contact with
$\Sigma$ at $\bar{z}_1:=\bar{z}(\bar{t}_1)$, $\bar{t}_1 \in (0,t_f)$
($\bar{z}:=(\bar{x},\bar{p})$). We denote $H_{12}=\{H_1,H_2\}$ the Poisson bracket of
$H_1$ with $H_2$ and make the following assumption:
\begin{itemize}
\item[(A1)] $H_{12}(\bar{z}_1) > 0$.
\end{itemize}
In \cite{kupka-1987a} terms, $\bar{z}_1$ is a \emph{regular} (or \emph{normal})
switching point. This condition is called the \emph{strict bang-bang Legendre} condition
in \cite{agrachev-2002a}.
The analysis of this section readily extends to a finite number of such switchings.

\begin{lmm} $\bar{z}$ is the concatenation of the flows of $H_1$ and then $H_2$.
\end{lmm}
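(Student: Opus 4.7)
The plan is to analyze the sign of the auxiliary function $\varphi(t):=(H_1-H_2)(\bar{z}(t))$ on $[0,t_f]$. By hypothesis, $\bar{z}$ meets $\Sigma$ only at $\bar{t}_1$, so $\varphi$ vanishes exactly at this time and retains a constant sign on each connected component of $[0,t_f]\setminus\{\bar{t}_1\}$, provided we can check $\varphi$ is continuous and does not vanish off $\bar{t}_1$. The conclusion will follow by reading off which of $\Omega_1$ or $\Omega_2$ the extremal lies in on each side.

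Before studying $\varphi$, I would first show that on each connected subarc contained in $\Omega_i$, the reference extremal is an integral curve of $\vec{H_i}$. By (\ref{eq1}), on $\Omega_i$ one has $H(z,\bar{u}(t))\leq H_i(z)$ with equality at $z=\bar{z}(t)$, so the function $z\mapsto H(z,\bar{u}(t))-H_i(z)$ attains a maximum equal to $0$ at $\bar{z}(t)$; its differential therefore vanishes, yielding $\vec{H}(\bar{z}(t),\bar{u}(t))=\vec{H_i}(\bar{z}(t))$ whenever $\bar{z}(t)\in\Omega_i$. Hence $\bar{z}$ is smooth on each of $[0,\bar{t}_1)$ and $(\bar{t}_1,t_f]$ and continuous on $[0,t_f]$.

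Next, I compute $\dot{\varphi}$ on both sides of $\bar{t}_1$. On $\Omega_1$, $\dot{\varphi}=\{H_1,H_1-H_2\}=-H_{12}(\bar{z}(t))$, and on $\Omega_2$, $\dot{\varphi}=\{H_2,H_1-H_2\}=\{H_2,H_1\}=-H_{12}(\bar{z}(t))$ as well. Both one-sided derivatives thus extend continuously to $\bar{t}_1$ with common value $-H_{12}(\bar{z}_1)<0$ by assumption (A1). Since $\varphi(\bar{t}_1)=0$ and $\varphi$ is strictly decreasing through $\bar{t}_1$, there exist left and right neighbourhoods of $\bar{t}_1$ on which $\varphi>0$ and $\varphi<0$ respectively. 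Combined with the fact that $\bar{t}_1$ is the unique zero of $\varphi$ on $[0,t_f]$, continuity forces $\varphi>0$ on $[0,\bar{t}_1)$ and $\varphi<0$ on $(\bar{t}_1,t_f]$. Thus $\bar{z}([0,\bar{t}_1))\subset\Omega_1$ and $\bar{z}((\bar{t}_1,t_f])\subset\Omega_2$, and by the first step $\bar{z}$ is indeed the concatenation of the flow of $\vec{H_1}$ followed by that of $\vec{H_2}$.

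The only delicate point is the passage from the maximized pointwise identity $H(\bar{z}(t),\bar{u}(t))=H_i(\bar{z}(t))$ to the identity of Hamiltonian vector fields $\vec{H}(\bar{z}(t),\bar{u}(t))=\vec{H_i}(\bar{z}(t))$; this is where the hypothesis (\ref{eq1}) that $H_i$ realizes the \emph{pointwise} maximum on all of $\Omega_i$ (and not only along the extremal) is essential, as it gives the vanishing of the full differential of $H(\cdot,\bar{u}(t))-H_i$ at $\bar{z}(t)$. All remaining steps are routine manipulations of Poisson brackets and a continuity argument.
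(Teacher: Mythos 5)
Your proof is correct and follows essentially the same route as the paper's: identify the extremal with the flow of $H_i$ on each component of $[0,t_f]\setminus\{\bar{t}_1\}$ via the maximization condition, observe that the one-sided derivatives of $(H_1-H_2)(\bar{z}(t))$ at $\bar{t}_1$ both equal $-H_{12}(\bar{z}_1)<0$ regardless of which Hamiltonian governs each side, and conclude the sign of $H_1-H_2$ on each side by connectedness. Your explicit justification that the pointwise maximality of $H_i$ over the open set $\Omega_i$ forces $\vec{H}(\bar{z}(t),\bar{u}(t))=\vec{H_i}(\bar{z}(t))$ (vanishing of the differential at an interior maximum) merely spells out a step the paper leaves implicit.
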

\begin{proof} The extremal having only one contact with $\Sigma$ at $\bar{z}_1$,
$\bar{z}(t)$ is either in $\Omega_1$ or $\Omega_2$ for $t \neq \bar{t}_1$. Because of
(\ref{eq1}), the maximization condition of the maximum principle implies that
$\bar{z}$ is given by the flow of either $H_1$ or $H_2$ on $[0,\bar{t}_1]$. In both
cases,
\[ \frac{\d}{\d t}(H_1-H_2)(\bar{z}(t))|_{t=\bar{t}_1}=-H_{12}(\bar{z}_1)<0, \]
so $H_1>H_2$ before $\bar{t}_1$ ($H_2>H_1$ after $\bar{t}_1$, \resp)
and the only possibility is an $H_1$ then $H_2$ concatenation of flows.
\end{proof}

\noi As a result of (A1), $\Sigma$ is a codimension one submanifold in the nbd of
$\bar{z}_1$, and one can define locally a function $z_0 \mapsto t_1(z_0)$ such that
$z_1(t_1(z_0),z_0)$ belongs to $\Sigma$ for $z_0$ in a nbd of $\bar{z}_0:=\bar{z}(0)$.
As we have just done, we will denote
\[ z_i(t,z_0)=e^{t \vec{H}_i}(z_0),\quad i=1,2, \]
the Hamiltonian flows of $H_1$ and $H_2$. These flows will be assumed complete for the
sake of simplicity. We will denote $'=\frpp{}{z}$ for flows. Clearly,

\begin{lmm} \label{lmm1}
\[ t_1'(z_0) = \frac{(H_1-H_2)'}{H_{12}}(z_1(t_1(z_0),z_0)) z_1'(t_1(z_0),z_0). \]
\end{lmm}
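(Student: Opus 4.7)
The function $t_1(z_0)$ is characterized by the implicit relation
\[ G(t,z_0):=(H_1-H_2)(z_1(t,z_0))=0, \]
so the natural approach is to differentiate this identity by the chain rule, after checking that the implicit function theorem (IFT) produces a smooth local solution $t_1$. My first step would be to compute $\frp{G}{t}$ at the reference point $(\bar{t}_1,\bar{z}_0)$. Since $z_1(\cdot,z_0)$ is the flow of $\vec{H}_1$, and the paper uses the convention $\dot f=\{H,f\}$ along the flow of $\vec{H}$ (as visible from $\dot H_1=\{H_0+\rho H_1,H_1\}=H_{01}$ in the proof of Proposition~\ref{prop2.5}), one gets
\[ \frp{G}{t}(\bar{t}_1,\bar{z}_0)=\{H_1,H_1-H_2\}(\bar{z}_1)=-H_{12}(\bar{z}_1), \]
which is nonzero by assumption (A1). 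Hence the IFT provides a smooth branch $z_0\mapsto t_1(z_0)$ with $t_1(\bar{z}_0)=\bar{t}_1$, and the same computation gives $\frp{G}{t}(t_1(z_0),z_0)=-H_{12}(z_1(t_1(z_0),z_0))$ on a whole neighbourhood of $\bar{z}_0$.

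The second step is to differentiate the relation $G(t_1(z_0),z_0)\equiv 0$ with respect to $z_0$. Writing $z_1:=z_1(t_1(z_0),z_0)$ for brevity, the chain rule gives
\[ (H_1-H_2)'(z_1)\bigl[\vec{H}_1(z_1)\,t_1'(z_0)+z_1'(t_1(z_0),z_0)\bigr]=0; \]
substituting $(H_1-H_2)'(z_1)\vec{H}_1(z_1)=-H_{12}(z_1)$ and solving for $t_1'(z_0)$ yields precisely the formula announced in the lemma. There is no essential obstacle in this argument, which amounts to one application of the IFT plus the chain rule; the only point needing care is the sign of the Poisson bracket, which enters through $dH_2\cdot\vec{H}_1=\{H_1,H_2\}$. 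As a consistency check, (A1) forces $-H_{12}(\bar{z}_1)<0$, which matches the transition of $H_1-H_2$ from positive to negative at $\bar{t}_1$ recorded in the previous lemma, i.e.\ the $H_1\to H_2$ ordering of the concatenation.
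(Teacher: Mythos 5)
Your derivation is correct and is exactly the intended argument: the paper states this lemma with no proof at all (it is prefaced by ``Clearly,''), and the implicit-function-theorem-plus-chain-rule computation you give, with the sign convention $\dot F=\{H,F\}$ taken from the proof of Proposition~\ref{prop2.5} so that $d(H_1-H_2)\cdot\vec{H}_1=-H_{12}$, is the standard way to fill it in. Your consistency check against the sign of $H_{12}(\bar z_1)$ in (A1) and the $H_1\to H_2$ ordering is also correct.
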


\noi One then defines locally $z_0 \mapsto z(t,z_0)=(x(t,z_0),p(t,z_0)) :=z_1(t,z_0)$
if $t \leq t_1(z_0)$,
and $z(t,z_0):=z_2(t-t_1(z_0),z_1(t_1(z_0),z_0))$ if $t \geq t_1(z_0)$.
We recall the following standard computation:
\begin{lmm} \label{lmm2}
For $t>t_1(z_0)$,
\[ \frp{z}{z_0}(t,z_0) =
   z_2'(t-t_1(z_0),z_1(t_1(z_0),z_0))(I+\sigma(z_0))z_1'(t_1(z_0),z_0) \]
with
\begin{equation} \label{eq3}
  \sigma(z_0) = \vec{H_1-H_2}\frac{(H_1-H_2)'}{H_{12}}(z_1(t_1(z_0),z_0)).
\end{equation}
\end{lmm}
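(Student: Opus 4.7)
The plan is a direct chain-rule computation, using Lemma~\ref{lmm1} at the end to rewrite $t_1'(z_0)$.

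First I would fix the notation $\tau_1:=t_1(z_0)$ and $\bar{z}_1:=z_1(\tau_1,z_0)$, so that for $t>\tau_1$ the definition gives $z(t,z_0)=z_2(t-\tau_1,\bar{z}_1)$. Differentiating with respect to $z_0$ and using the chain rule on both the time argument $t-\tau_1$ and the space argument $\bar{z}_1$, I get
\[ \frp{z}{z_0}(t,z_0) = -\vec{H_2}(z(t,z_0))\, t_1'(z_0) + z_2'(t-\tau_1,\bar{z}_1)\,\frac{\d}{\d z_0}\bigl[z_1(\tau_1,z_0)\bigr]. \]
Expanding the last term by chain rule once more,
\[ \frac{\d}{\d z_0}\bigl[z_1(\tau_1,z_0)\bigr] = \vec{H_1}(\bar{z}_1)\, t_1'(z_0) + z_1'(\tau_1,z_0). \]

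The key algebraic step is to factor $z_2'(t-\tau_1,\bar{z}_1)$ out of the whole expression. For this I would use the fact that a Hamiltonian flow transports its generator, \ie
\[ \vec{H_2}(z(t,z_0)) = \vec{H_2}(z_2(t-\tau_1,\bar{z}_1)) = z_2'(t-\tau_1,\bar{z}_1)\,\vec{H_2}(\bar{z}_1), \]
a standard consequence of $\vec{H_2}$ being invariant under its own flow (differentiating the identity $z_2(s+h,z)=z_2(h,z_2(s,z))$ in $h$ at $h=0$, then comparing with $\frpp{}{s}$). Substituting and grouping yields
\[ \frp{z}{z_0}(t,z_0) = z_2'(t-\tau_1,\bar{z}_1)\Bigl[\vec{H_1-H_2}(\bar{z}_1)\, t_1'(z_0) + z_1'(\tau_1,z_0)\Bigr]. \]

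Finally, I invoke Lemma~\ref{lmm1} to replace $t_1'(z_0)$ by $\frac{(H_1-H_2)'}{H_{12}}(\bar{z}_1)\, z_1'(\tau_1,z_0)$, factor $z_1'(\tau_1,z_0)$ on the right, and recognize the bracketed operator as $I+\sigma(z_0)$ with $\sigma$ given by (\ref{eq3}). This produces the claimed formula. The only real subtlety is the transport identity for $\vec{H_2}$; the rest is bookkeeping of the chain rule for the composed flow, which is routine once the dependence of $\tau_1$ on $z_0$ is properly tracked.
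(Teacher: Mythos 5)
Your proof is correct and follows essentially the same route as the paper: chain rule on the composed flow, factoring out $z_2'$ via the transport identity $\vec{H}_2(z_2(s,z))=z_2'(s,z)\vec{H}_2(z)$ (the paper phrases this as the adjoint action of a flow being idempotent on its generator), and then Lemma~\ref{lmm1} to eliminate $t_1'(z_0)$. Nothing to add.
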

\begin{proof} The derivative is equal to (arguments omitted)
\[ -\dot{z}_2 t_1' + z_2' (\dot{z}_1 t_1'+z_1'), \]
hence the result by factoring out $z_2'$ and using Lemma~\ref{lmm1} plus the fact that
the adjoint action of a flow is idempotent on its generator,
\[ (z_2'(s,z))^{-1} \vec{H}_2(z_2(s,z)) = \vec{H}_2(z),\quad (s,z) \in \R\times T^*X. \]
\end{proof}

\noi The function
\begin{equation} \label{eq3.30}
  \delta(t) := \det \frp{x}{p_0}(t,\bar{z}_0),\quad t \neq \bar{t}_1,
\end{equation}
is piecewise continuous along the reference extremal,
and we make the additional assumption that
\begin{itemize}
\item[(A2)] $\delta(t) \neq 0$, $t \in (0,\bar{t}_1) \cup (\bar{t}_1,t_f]$, and
$\delta(\bar{t}_1+)\delta(\bar{t}_1-)>0$.
\end{itemize}
This condition means that we assume disconjugacy on $(0,\bar{t}_1]$ and
$[\bar{t}_1,t_f]$ along the linearized flows of $H_1$ and $H_2$, respectively, and
that the jump (encoded by the matrix $\sigma(z_0)$) in the Jacobi fields is such that
there is no sign change in the determinant. This is exactly the condition one is able
to check numerically by computing Jacobi fields (see \cite{bonnard-2003a,oms-2012a}, \eg).
As will be clear from the proof of the
result below, geometrically this assumption is the \emph{no-fold condition} of
\cite{schattler-2002a} (no fold outside $\bar{t}_1$, no \emph{broken} fold at
$\bar{t}_1$).

\begin{thrm} Under assumptions (A0)-(A2), the reference trajectory is
a $\CC^0$-local minimizer among all trajectories with same endpoints.
\end{thrm}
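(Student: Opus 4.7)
The plan is to extend the classical construction of a field of extremals, reviewed in the appendix, to the broken case by exploiting assumption (A2) exactly at the switching time $\bar{t}_1$. First I would parametrize candidate extremals by their initial costate: for $p_0$ near $\bar{p}_0$, set $z_0 := (x_0, p_0)$ and follow the concatenated flow $z(\cdot, z_0)$ of $\vec{H}_1$ up to $t_1(z_0)$, then of $\vec{H}_2$ afterwards, as in Lemmas~\ref{lmm1} and~\ref{lmm2}. The core object is the map $\Phi : (t, p_0) \mapsto (t, x(t, z_0))$ into $\R \times X$, whose Jacobian with respect to $p_0$ has determinant $\delta(t)$ on $[0, \bar{t}_1)$ by definition, and, by Lemma~\ref{lmm2} combined with the explicit rank-one form of $\sigma$, also $\delta(t)$ on $(\bar{t}_1, t_f]$. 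Assumption (A2) yields that $\Phi$ is a local homeomorphism onto a neighbourhood of $(t, \bar{x}(t))$ for every $t \neq \bar{t}_1$, and the sign condition $\delta(\bar{t}_1+)\delta(\bar{t}_1-) > 0$, together with the continuity of $\Phi$ across $\bar{t}_1$, ensures by a standard degree argument that $\Phi$ remains a local homeomorphism through the switching time as well.

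Next, the image of $\Phi$ covers a tubular neighbourhood $\mathcal{N}$ of the graph of $\bar{x}$, and through each point of $\mathcal{N}$ passes exactly one extremal of the field. Lifting back to $T^*X$ via the costate $p(t, p_0)$ produces on $\mathcal{N}$ a (piecewise smooth) section $(t, x) \mapsto P(t, x)$, with a jump across the image $\Sigma$ of the switching surface. The key algebraic fact is that the jump matrix $\sigma(z_0)$ in~(\ref{eq3}) is the outer product of $\vec{H_1 - H_2}$ by $(H_1 - H_2)'/H_{12}$, so $\sigma^2 = 0$ since $\{H_1-H_2,H_1-H_2\}=0$, and $(I+\sigma)$ is symplectic; consequently, the Poincar\'e--Cartan $1$-form $\omega := P\,\d x - \tilde{H}(t, x)\,\d t$, with $\tilde{H}(t, x) := H_i(x, P(t, x))$ on the appropriate side of $\Sigma$, is closed on $\mathcal{N}$, in the sense that its integral over any curve from $(0, x_0)$ to $(t_f, x_f)$ contained in $\mathcal{N}$ depends only on the endpoints.

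In the third step, given any competing admissible pair $(x, u)$ whose graph stays in $\mathcal{N}$ and shares endpoints with $\bar{x}$, I apply the Weierstrass comparison along $t \mapsto (t, x(t))$: the maximization condition~(\ref{eq1}) together with $p^0=-1$ gives, pointwise in $t$ and off $\Sigma$, the excess inequality $f^0(x,u) \geq P(t,x)\,\dot{x} - \tilde{H}(t, x)$, whence
\[
\int_0^{t_f} f^0(x(t), u(t))\,\d t \;\geq\; \int_0^{t_f} \bigl[P(t,x(t))\,\dot{x}(t) - \tilde{H}(t, x(t))\bigr]\,\d t \;=\; \int_0^{t_f} f^0(\bar{x}(t), \bar{u}(t))\,\d t,
\]
the final equality following from closedness of $\omega$ and the endpoint match applied also to the reference trajectory. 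This is precisely $\CC^0$-local minimality.

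The main obstacle is the middle step, making sense of the field across the switching surface. Away from $\bar{t}_1$ the construction is the smooth one reviewed in the appendix, but at $\bar{t}_1$ the section $P$ is discontinuous in $t$, reflecting the jump $\sigma$; one must verify both (i) that $\Phi$ stays a homeomorphism, which is where the non-vanishing of $\delta(\bar{t}_1\pm)$ and the sign condition of (A2) are simultaneously consumed, and (ii) that $\omega$ extends as a closed form across the broken locus, which requires the Lagrangian character of $(I + \sigma)$ and is the Hamiltonian translation of the \emph{no-fold} condition of Noble--Sch\"attler~\cite{schattler-2002a}.
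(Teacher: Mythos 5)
Your overall route---a broken field of extremals, the symplectic rank-one jump $I+\sigma$, exactness of the Poincar\'e--Cartan forms on each leaf, and a Weierstrass comparison---is the same as the paper's, but two of the steps you treat as routine are where the actual content lies. First, parametrizing the field by the initial costate alone makes $\Phi(t,p_0)=(t,x(t,x_0,p_0))$ collapse to the single point $(0,x_0)$ at $t=0$, so it cannot be a local homeomorphism near the left endpoint; the paper replaces the fiber $T^*_{x_0}X$ by a transverse Lagrangian perturbation $\LL_0$ (as in appendix~\ref{sa}) so that the relevant Jacobian is invertible on all of $[0,\bar{t}_1]$, $t=0$ included, and only then projects. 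Second, and more seriously, your ``standard degree argument'' at $\bar{t}_1$ is precisely the crux of the theorem: knowing that $\Phi$ is a local homeomorphism on either side of the switch and that $\delta$ keeps its sign does not by itself give injectivity of the glued map near the switching surface. The paper's key lemma computes the ratio $\delta(\bar{t}_1+)/\delta(\bar{t}_1-)$ explicitly as $1+\frp{t_1}{p_0}(\bar{z}_0)\bigl(\frp{x_1}{p_0}(\bar{t}_1,\bar{z}_0)\bigr)^{-1}\nabla_p(H_1-H_2)(\bar{z}_1)$, and identifies this quantity with $\dot{\psi}(\bar{t}_1+)$, the speed at which the field crosses the projected switching surface $\Pi(\Sigma_1)=\{\psi=0\}$ from the right. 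Its positivity (the no-broken-fold condition) is what forces every neighbouring field trajectory to cross $\Pi(\Sigma_1)$ transversally in the same direction, so that the two half-fields fit together on the two sides of a hypersurface and the glued projection is a bijection. You need to make this identification explicit rather than appeal to degree theory, which at best controls the image, not local injectivity at the broken locus.

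Two smaller omissions. Path-independence of your piecewise form $\omega$ requires joining the competitor's crossing point of $\Pi(\Sigma_1)$ to $\bar{z}_1$ by a curve \emph{inside} the switching surface and using $H_1=H_2$ there to equate the contributions of $\alpha_1$ and $\alpha_2$ along that connecting piece; you have the ingredients but do not perform this homotopy. Finally, restricting to competitors whose graphs meet $\Pi(\Sigma_1)$ in finitely many transverse points is not automatic for an arbitrary admissible pair: the paper's last step approximates a general absolutely continuous competitor in the $\W^{1,\iy}$-topology by $\CC^1$ curves with such crossings and bounds the resulting error by an arbitrary $\veps$; some argument of this kind is needed to close the proof for all $\CC^0$-close trajectories.
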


\begin{proof}
We proceed in five steps.\\

\noi\emph{Step 1.}
According to (A2), $\frpp{x_1}{p_0}(t,\bar{z}_0)$ is invertible for $t \in
(0,\bar{t}_1]$; one can then construct a Lagrangian perturbation $\LL_0$ transverse to
$T^*_{x_0}X$ containing $\bar{z}_0$ such that $\frpp{x_1}{z_0}(t,\bar{z}_0)$ is
invertible for $t \in [0,\bar{t}_1]$, $t=0$ included, $\frpp{}{z_0}$ denoting the $n$
partials wrt.\ $z_0 \in \LL_0$. (See appendix~\ref{sa}.) For
$\veps>0$ small enough define
\[ \LL_1 := \{(t,z) \in \R\times T^*X\ |\ (\exists z_0 \in \LL_0) : t \in
   (-\veps,t_1(z_0)+\veps) \text{ s.t.\ } z=z_1(t,z_0)\}. \]
By restricting $\LL_0$ if necessary, $\Pi:\R\times T^*X \to \R\times X$,
$(t,z) \mapsto (t,x)$ induces a
diffeomorphism of $\LL_1$ onto its image. Similarly, (A2) implies that
\[ \frp{}{p_0} \left[ x_2(t-t_1(z_0),z_1(t_1(z_0),z_0)) \right]|_{z_0=\bar{z}_0} \]
is invertible for $t \in
[\bar{t}_1,t_f]$; restricting again $\LL_0$ if necessary, one can
assume that $\Pi$ also induces a diffeomorphism from
\[ \LL_2 := \{(t,z) \in \R\times T^*X\ |\ (\exists z_0 \in \LL_0) : t \in
   (t_1(z_0)-\veps,t_f+\veps) \]
\[ \text{ s.t.\ } z=z_2(t-t_1(z_0),z_1(t_1(z_0),z_0))\} \]
onto its image.\\

\noi\emph{Step 2.} Define $\Sigma_1:=\LL_1 \cap (\R\times\Sigma)$. As $(t,z_0) \mapsto
(t,x_1(t,z_0))$ is a diffeomorphism from $\R\times\LL_0$ onto $\Pi(\LL_1)$, there
exists an inverse function $z_0(t,x)$ such that $\Pi(\Sigma_1)=\{\psi=0\}$ with
\[ \psi(t,x) := t-t_1(z_0(t,x)). \]
Denote $\psi(t):=\psi(t,\bar{x}(t))$ the evaluation of this function along the
reference trajectory. By construction, $\dot{\psi}(\bar{t}_1-)=1>0$ and (compare
\cite{schattler-2002a})
\[ \dot{\psi}(\bar{t}_1+) = 1+\frp{t_1}{z_0}(\bar{z}_0)
   \left( \frp{x_1}{z_0}(\bar{t}_1,\bar{z}_0) \right)^{-1}
   \nabla_p(H_1-H_2)(\bar{z}_1). \]

\begin{lmm}
\begin{equation} \label{eq2}
  \delta(\bar{t}_1+) =  \delta(\bar{t}_1-) \left( 1 + \frp{t_1}{p_0}(\bar{z}_0)
   \left( \frp{x_1}{p_0}(\bar{t}_1,\bar{z}_0) \right)^{-1}
   \nabla_p(H_1-H_2)(\bar{z}_1) \right).
\end{equation}
\end{lmm}
\begin{proof} By virtue of Lemma~\ref{lmm2},
\[ \frp{x}{p_0}(\bar{t}_1+,\bar{z}_0) = \frp{x_1}{p_0}(\bar{t}_1,\bar{z}_0)
 + \nabla_p(H_1-H_2)%
   \underbrace{\frac{(H_1-H_2)'}{H_{12}}(\bar{z}_1)\frp{z_1}{p_0}(\bar{t}_1,\bar{z}_0)}_%
   {\displaystyle = \frp{t_1}{p_0}(\bar{z}_0)} \]
(the second equality coming from Lemma~\ref{lmm1}). Assumption (A2) implies
$\delta(\bar{t}_1-) \neq 0$ so, taking determinants,
\begin{eqnarray*}
\delta(\bar{t}_1+) &=& \delta(\bar{t}_1-)
\det \left( I+\left( \frp{x_1}{p_0}(\bar{t}_1,\bar{z}_0) \right)^{-1} 
\nabla_p(H_1-H_2)(\bar{z}_1)\frp{t_1}{p_0}(\bar{z}_0) \right)\\
&=& \delta(\bar{t}_1-) \left( 1+\frp{t_1}{p_0}(\bar{z}_0)
\left( \frp{x_1}{p_0}(\bar{t}_1,\bar{z}_0) \right)^{-1}
\nabla_p(H_1-H_2)(\bar{z}_1) \right)
\end{eqnarray*}
as $\det(I+x\t y)=1+(x|y)$.
\end{proof}

\noi Since $\delta(\bar{t}_1+)$ and $\delta(\bar{t}_1-)$ have the same sign, the quantity
in brackets in (\ref{eq2}) must be positive. Accordingly, $\dot{\psi}(\bar{t}_1+)>0$
as $\LL_0$ can be taken arbitrarily close to $T^*_{x_0}X$. So, locally,
$\Pi(\Sigma_1)$ is a submanifold that splits $\R\times X$ in two and, by restricting
$\LL_0$ if necessary, every extremal of the field $t \mapsto x(t,z_0)$ for $z_0 \in
\LL_0$ crosses $\Pi(\Sigma_1)$ transversally. Defining 
\[ \LL_1^- := \{(t,z) \in \R\times T^*X\ |\ (\exists z_0 \in \LL_0) : t \in
   [0,t_1(z_0)] \text{ s.t.\ } z=z_1(t,z_0)\} \]
and
\[ \LL_2^+ := \{(t,z) \in \R\times T^*X\ |\ (\exists z_0 \in \LL_0) : t \in
   [t_1(z_0),t_f] \]
\[ \text{ s.t.\ } z=z_2(t-t_1(z_0),z_1(t_1(z_0),z_0))\}, \]
one can hence piece together the restrictions of $\Pi$ to $\LL_1^-$ and $\LL_2^+$ into
a continuous bijection from $\LL_1^- \cup \LL_2^+$ into $\Pi(\LL_1^- \cup \LL_2^+)$.
By restricting to a compact neighbourhood of the graph of
$\bar{z}$, one may assume that $\Pi$ induces a homeomorphism on its image.\\

\begin{figure}[t!] \centering
\includegraphics[width=10cm]{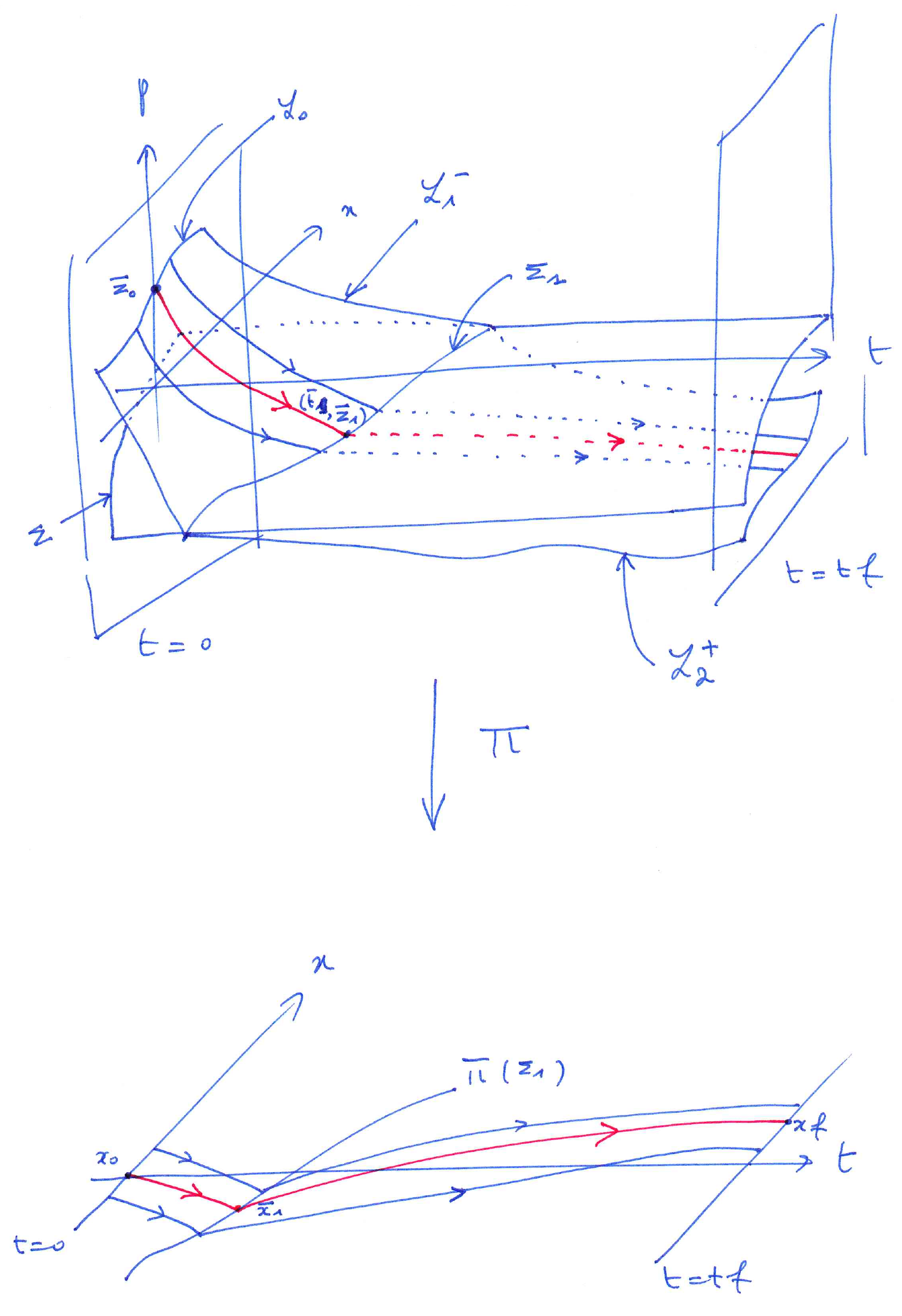}
\caption{The field of extremals.} \label{fig1}
\end{figure}

\noi\emph{Step 3.} Denote $\alpha_i := p\,\d x-H_i(z)\d t$, $i=1,2$, the Poincar\'e-Cartan
forms associated with $H_1$ and $H_2$, respectively.
To prove that $\alpha_1$ is exact on $\LL_1$,
it is enough to prove that it is closed. Indeed, if
$\gamma(s):=(t(s),z_1(t(s),z_0(s)))$ is a closed curve on $\LL_1$, it retracts
continuously on $\gamma_0(s) := (0,z_0(s))$ so that, provided $\alpha_1$ is closed,
\[ \int_\gamma \alpha_1 = \int_{\gamma_0} \alpha_1 = \int_{\gamma_0} p\,\d x = 0 \]
because $z_0(s)$ belongs to $\LL_0$ that can be chosen such that $p\,\d x$ is exact on
it. (Compare \cite[\S 17]{agrachev-2004a}.)
Similarly, to prove that $\alpha_2$ is
exact on $\LL_2$, it suffices to prove that it is closed: If
$\gamma(s):=(t(s),z_2(t(s)-t_1(z_0(s)),z_1(t_1(z_0(s)),z_0(s))))$ is a closed curve in
$\LL_2$, it readily retracts continuously on the curve
$\gamma_1(s) := (t_1(z_0(s)),z_1(t_1(z_0(s)),z_0(s)))$ in $\Sigma_1$,
which retracts continuously on $\gamma_0(s) := (0,z_0(s))$ again. Then, as $H_1=H_2$
on $\Sigma$,
\[ \int_\gamma \alpha_2 = \int_{\gamma_1} \alpha_2 = \int_{\gamma_1} \alpha_1
 = \int_{\gamma_0} \alpha_1 \]
that vanishes as before.
To prove that $\alpha_1$ is closed, consider tangent vectors at $(t,z) \in \LL_1$;
a parameterization of this tangent space is
\[ (\delta t,\vec{H}_1(z)\delta t+z_1'(t,z_0)\delta z_0),\quad
   (\delta t,\delta z_0) \in \R \times T_{z_0}\LL_0 \]
where $z_0 \in \LL_0$ is such that $z=z_1(t,z_0)$. For two such vectors $v_1$, $v_2$,
\begin{eqnarray*}
  \d\alpha_1(t,z)(v_1,v_2) &=& (\d p\wedge\d x-\d H_1(z)\d t)(v_1,v_2)\\
  &=& \d p\wedge\d x(z_1'(t,z_0)\delta z_0^1,z_1'(t,z_0)\delta z_0^2)\\
  &=& \d p\wedge\d x(\delta z_0^1,\delta z_0^2)\\
  &=& 0
\end{eqnarray*}
because $\exp(t\vec{H}_1)$ is symplectic and $\LL_0$ is Lagrangian.
Regarding $\alpha_2$, the tangent space at $(t,z) \in \LL_2$ is parameterized
according to
\[ (\delta t,\vec{H}_2(z)\delta t+z_2'(t-t_1(z_0),z_1(t_1(z_0),z_0))(I+\sigma(z_0))%
   z_1'(t,z_0)\delta z_0) \]
with $(\delta t,\delta z_0) \in \R \times T_{z_0}\LL_0$,
and where $z_0 \in \LL_0$ is such that $z=z_2(t-t_1(z_0),z_1(t_1(z_0),z_0))$.
For two such vectors $v_1$, $v_2$,
\begin{eqnarray*}
  \d\alpha_2(t,z)(v_1,v_2) &=& (\d p\wedge\d x-\d H_2(z)\d t)(v_1,v_2)\\
  &=& \d p\wedge\d x((I+\sigma(z_0))z_1'(t,z_0)\delta z_0^1,%
                     (I+\sigma(z_0))z_1'(t,z_0)\delta z_0^2)\\
  &=& \d p\wedge\d x(z_1'(t,z_0)\delta z_0^1,z_1'(t,z_0)\delta z_0^2)
\end{eqnarray*}
because $\exp(t\vec{H}_2)$ is symplectic and because
\begin{lmm}
\[ I+\sigma(z_0) \in \Sp(2n,\R). \]
\end{lmm}
\begin{proof}
For any $z \in \R^{2n}$,
\[ \t(I+Jz\t z)J(I+Jz\t z) = J-z\t z+z\t z+z(\underbrace{\t zJz}_{0})\t z = J. \]
This proves the lemma because of the definition (\ref{eq3}) of $\sigma(z_0)$.
\end{proof}

\noi One then concludes as before that $\alpha_2$ is closed using the fact that
$\exp(t\vec{H}_1)$ is symplectic and $\LL_0$ is Lagrangian.\\

\noi\emph{Step 4.} Let $(x,u):[0,t_f] \to X\times U$ be an admissible pair.
We first assume that $x$ is of class $\CC^1$ and that its graph has only one isolated
contact with $\Pi(\Sigma_1)$, at some point point $(t_1,x(t_1))$.
For $x$ close enough to $\bar{x}$ in the $\CC^0$-topology, this graph
has a unique lift $t \mapsto (t,x(t),p(t))$ in $\LL_1^- \cup \LL_2^+$. As a
gluing at $t_1$ of two absolutely continous functions, $z:=(x,p):[0,t_f] \to T^*X$ is
absolutely continous. Denote $\gamma_1$ and $\gamma_2$ the two pieces of this lift.
Denote similarly $\bar{\gamma}_1$ and $\bar{\gamma}_2$ the pieces of the graph of the 
extremal $\bar{z}$ (see Fig.~\ref{fig2}). One has
\begin{eqnarray*}
  \int_0^{t_f} f^0(x(t),u(t))\,\d t &=& \left( \int_0^{t_1}+\int_{t_1}^{t_f} \right)
  (p(t)\dot{x}(t)-H(x(t),p(t),u(t)))\,\d t\\
  &\geq& \int_0^{t_1}     (p(t)\dot{x}(t)-H_1(x(t),p(t)))\,\d t\\
  &+&    \int_{t_1}^{t_f} (p(t)\dot{x}(t)-H_2(x(t),p(t)))\,\d t\\
  &=& \int_{\gamma_1} \alpha_1 + \int_{\gamma_2} \alpha_2
\end{eqnarray*}
since $z(t)$ belongs to $\Omega_1$ for $t \in [0,t_1)$ (\resp{} to $\Omega_2$ for
$t \in (t_1,t_f]$). By connec\-ted\-ness, there exists a smooth curve $\gamma_{12} \subset
\Sigma_1$ connecting $(\bar{t}_1,\bar{z}(\bar{t}_1))$ to $(t_1,z(t_1))$; having the
same endpoints, $\gamma_1$ and $\bar{\gamma}_1 \cup \gamma_{12}$ (\resp{} $\gamma_2$
and $-\gamma_{12} \cup \bar{\gamma}_2$) are homotopic.
Since $\alpha_1$ and $\alpha_2$ are exact one forms on $\LL_1$ and $\LL_2$,
respectively,
\begin{eqnarray*}
  \int_{\gamma_1} \alpha_1 + \int_{\gamma_2} \alpha_2 &=&
  \int_{\bar{\gamma}_1 \cup \gamma_{12}} \alpha_1
+ \int_{-\gamma_{12} \cup \bar{\gamma}_2} \alpha_2\\
  &=&
  \int_{\bar{\gamma}_1} \alpha_1
+ \int_{\bar{\gamma}_2} \alpha_2\\
  &=& \int_0^{t_f} f^0(\bar{x}(t),\bar{u}(t))\,\d t
\end{eqnarray*}
since $H_1=H_2$ on $\Sigma$.\\

\begin{figure}[t!] \centering
\includegraphics[width=10cm]{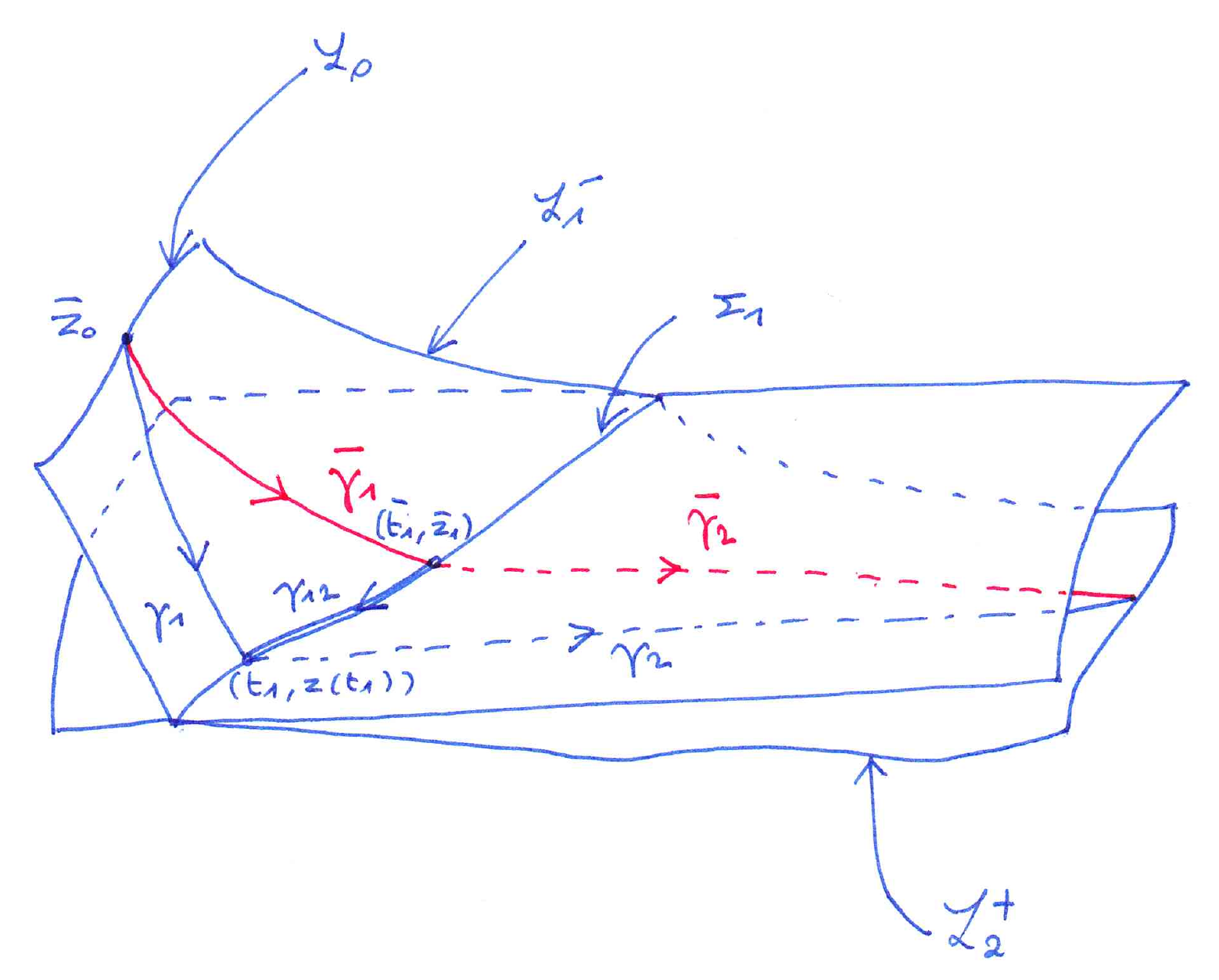}
\caption{Integration paths.} \label{fig2}
\end{figure}

\noi\emph{Step 5.} 
Consider finally an admissible pair $(x,u)$, $x$ close
enough to $\bar{x}$ in the $\CC^0$-topology. One can find $\tilde{x}$ of class $\CC^1$
arbitrarily close to $x$ in the $\W^{1,\iy}$-topology such that $\tilde{x}(0)=x_0$ and
$\tilde{x}(t_f)=x_f$. Moreover, as $\Pi(\Sigma_1)$ is a locally a smooth manifold, up
to some $\CC^1$-small perturbation one can assume that the graph of $\tilde{x}$ has only 
transverse intersections with $\Pi(\Sigma_1)$. Let $\tilde{z}:=(\tilde{x},\tilde{p})$
denote the associated lift; one has
\[ f^0(\tilde{x}(t),u(t)) =
   (\tilde{p}(t)\dot{\tilde{x}}(t)-H(\tilde{x}(t),\tilde{p}(t),u(t)))
 + \tilde{p}(t)(f(\tilde{x}(t),u(t))-\dot{\tilde{x}}(t)), \]
and the second term in the right-hand side can be made arbitrarily small when $\tilde{x}$
gets closer to $x$ in the $\W^{1,\iy}$-topology since
$(t,\tilde{z}(t))=\Pi^{-1}(t,\tilde{x}(t))$ remains bounded by continuity of the
inverse of $\Pi$.
Let then $\veps>0$; as a result of the previous discussion, there exists $\tilde{x}$
of class $\CC^1$ with same endpoints as $x$ and whose graph has only isolated contacts
with $\Pi(\Sigma_1)$ such that
\[ \int_0^{t_f} f^0(x(t),u(t))\,\d t \geq
   \int_0^{t_f} f^0(\tilde{x}(t),u(t))\,\d t - \veps, \]
\[ \int_0^{t_f} f^0(\tilde{x}(t),u(t))\,\d t \geq
   \int_0^{t_f} (\tilde{p}(t)\dot{\tilde{x}}(t)-H(\tilde{x}(t),\tilde{p}(t),u(t)))\,\d t
   - \veps. \]
One can extend straightforwardly the analysis of the previous step
to finitely many contacts with $\Pi(\Sigma_1)$, and
bound below the integral
in the right-hand side of the second inequality by the cost of the reference
trajectory. As $\veps$ is arbitrary, this allows to conclude.
\end{proof}

\section{Numerical example: The two-body potential} \label{s4}
Following \cite{gergaud-2006a}, we consider the two-body controlled problem in
dimension three. Restricting to negative energy, orbits of the uncontrolled motion are
ellipses, and the issue is to realize minimum fuel transfer between non-coplanar
orbits around a fixed center of mass. The potential is $V(q):=-\mu/|q|$ defined on
$Q := \{q \in \R^3\ |\ q \neq 0\}$, and we actually restrict to
\[ X := \{(q,v) \in TQ\ |\ |v|^2/2-\mu/|q| < 0,\ q \wedge v > 0\}. \]
(The last condition on the momentum avoids collisional trajectories and orientates
the elliptic orbits.) The constant $\mu$ is the gravitational constant that depends on
the attracting celestial body.
To keep things clear, a medium thrust case is presented below;
the final time is fixed to $1.3$ times the minimum time, approximately, which already ensures a
satisfying gain of consumption \cite{gergaud-2006a}.
In order to have fixed endpoints to perform a
conjugate point test according to \S\ref{s3} result, initial and final positions are
fixed on the orbits (fixed longitudes\footnote{Precisely, the longitude $l$ is defined
as the sum of three broken angles: $l=\Omega+\theta+\varpi$, where $\Omega$ is the
longitude of the ascending node (first Euler angle of the orbit plane with the
equatorial plane; the second Euler angle defines the inclination of the orbit),
$\theta$ is the argument of perigee (angle of the semi-major axis of
the ellipse, equal to the third Euler angle of the orbit plane), and $\varpi$ is the
true anomaly (polar angle with respect to the semi-major axis in the orbit plane).
Here, $\Omega=\theta=0$ on the initial and final orbits.}). A more relevant treatment
would leave the final longitude free (in accordance with assumption (ii) on the target
in \S\ref{s2}); this would require a focal point test that could be done much in the
same way (see, \eg, \cite{celestial-2012}). 
See Tab.~\ref{tab1} for a summary of the physical constants.

\begin{table}[t!]
\caption{Summary of physical constants used for the numerical commputation.}
\label{tab1}
\begin{tabular}{lrlr}
\\ \hline
\multicolumn{2}{l}{Gravitational constant $\mu$ of the Earth:} & 
\multicolumn{2}{r}{$398600.47\ \text{Km}^3\text{s}^{-2}$}\\
\\
Mass of the spacecraft: & $1500\ \text{Kg}$ &
Thrust: & $10\ \text{Newtons}$\\
\\
Initial perigee: & $6643\ \text{Km}$ &
Final perigee: & $42165\ \text{Km}$\\
Initial apogee: & $46500\ \text{Km}$ &
Final apogee: & $42165\ \text{Km}$\\
Initial inclination: & $0.1222\ \text{rad}$ &
Final inclination: & $0\ \text{rad}$\\
Initial longitude: & $\pi\ \text{rad}$ &
Final longitude: & $56.659\ \text{rad}$\\
\\
Minimum time: & $110.41\ \text{hours}$ &
Fixed final time: & $147.28\ \text{hours}$\\
\\
\multicolumn{2}{l}{$\mathrm{L}^1$ cost achieved (normalized):} &
\multicolumn{2}{r}{$67.617$}\\ \hline
\end{tabular}
\end{table}

As explained in \S\ref{s2}, the $\L^1$-minimization results in a competition between
two Hamiltonians: $H_0$ (coming from the drift, only), and $H_0+H_1$ (assuming the
control bound is normalized to $1$ after some rescaling). Both Hamiltonians are smooth
and fit in the framework set up in \S\ref{s3} to check sufficient optimality conditions.
Restricting to bang-bang (in the norm of the control) extremals,
regularity of the switchings is easily verified numerically,
while normality is taken care of by Proposition~\ref{prop2.1}. Then one has to check
the no-fold condition on the Jacobi fields. The optimal solution (see Fig.~\ref{fig3})
and these fields are
computed using the \texttt{hampath} software \cite{hampath}; as in
\cite{celestial-2012,gergaud-2006a},
a regularization by homotopy is used to capture the switching structure and
initialize the computation of the bang-bang extremal by single shooting. We are then
able to check condition (A2) directly on this extremal by a simple sign test
(including the jumps on the Jacobi fields at the regular switchings) on the
determinant of the fields (see Fig.~\ref{fig4}).
An alternative approach would
be to establish a convergence result as in \cite{trelat-2010b}, and to verify the
second order conditions on the sequence of regularized extremals. As underlined in
\S\ref{s1} and \S\ref{s3}, conjugate times may occur at or between switching
times. On the example treated, no conjugate point is detected on $[0,t_f]$, ensuring
strong local optimality. The extremal is then extended up to $3.5\,t_f$,
and a conjugate
point is detected about $3.2\,t_f$, at a switching point (sign change occurint at the
jump). A second test is provided Fig.~\ref{fig5}; by perturbing slightly the endpoint
conditions, one observes that conjugacy occurs not at a switching anymore, but along
a burn arc. 

\begin{rmrk} \label{rem4.1} As $H_0$ is the lift of a vector field, the determinant of
Jacobi fields is either identically zero or non-vanishing along a cost arc ($\rho=0$).
(Compare with the case of polyhedral control set; see also Corollary~3.9 in
\cite{schattler-2002a}.)
Moreover, coming from a mechanical system, the drift $F_0$
is the symplectic gradient of the energy function,
\[ E(q,v) := \frac{1}{2}|v|^2+V(q). \]
Accordingly, the $\delta x=(\delta q,\delta v)$ part of the Jacobi field (see appendix)
along an integral arc of $\vec{H}_0$ verifies
\[ \delta\dot x(t)=\vec{E}'(\bar{x}(t))\delta x(t), \]
so $\delta x$ has a constant determinant along such an arc since the associated flow
is symplectic.
In particular, the disconjugacy condition (A2) implies that the optimal solution
starts with a burn arc.
\end{rmrk}

\begin{figure}[t!] \centering
\includegraphics[width=\textwidth]{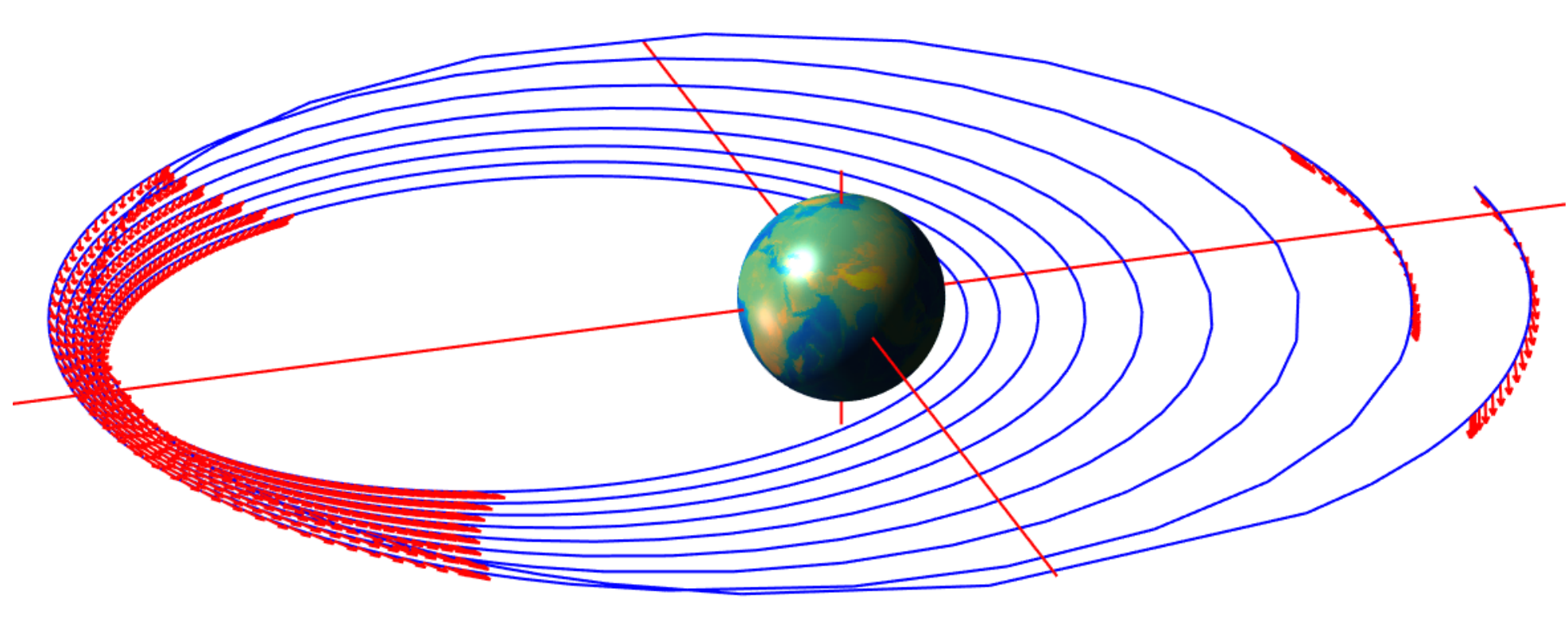}
\caption{$\mathrm{L}^1$ minimum trajectory.
The graph displays the trajectory (blue line), as well as the action of the
control (red arrows). 
The initial orbit is strongly eccentric ($0.75$) and slightly inclined
($7\ \text{degrees}$). The geostationary target orbit around the Earth is reached at
$t_f \simeq 147.28\ \text{hours}$.
The sparse structure of the control is clearly observed, with burn arcs concentrated
around perigees and apogees (see \cite{gergaud-2006a}).
The minimization leads to thrust only $46\%$ of the time.} \label{fig3}
\end{figure}

\begin{figure}[t!] \centering
\includegraphics[width=\textwidth]{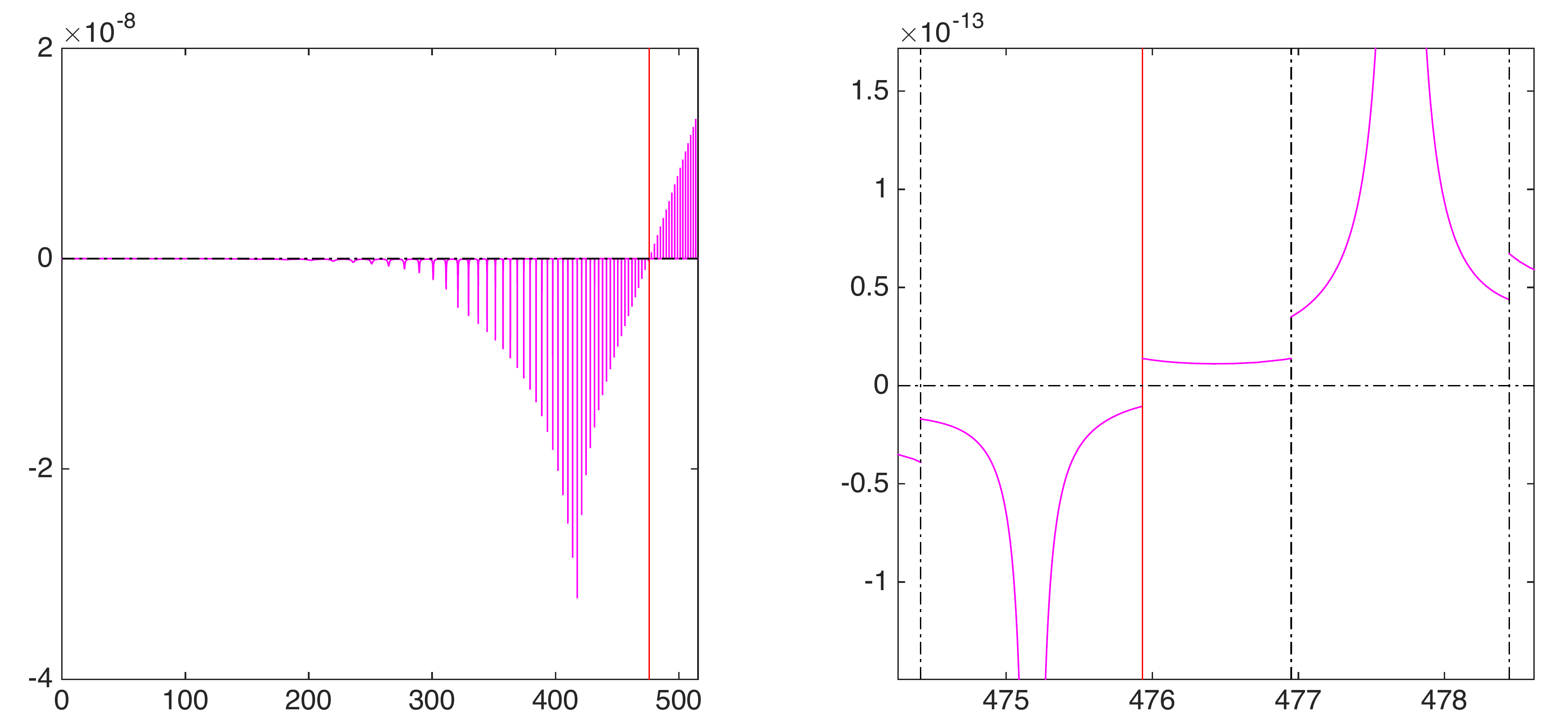}\\
\includegraphics[width=\textwidth]{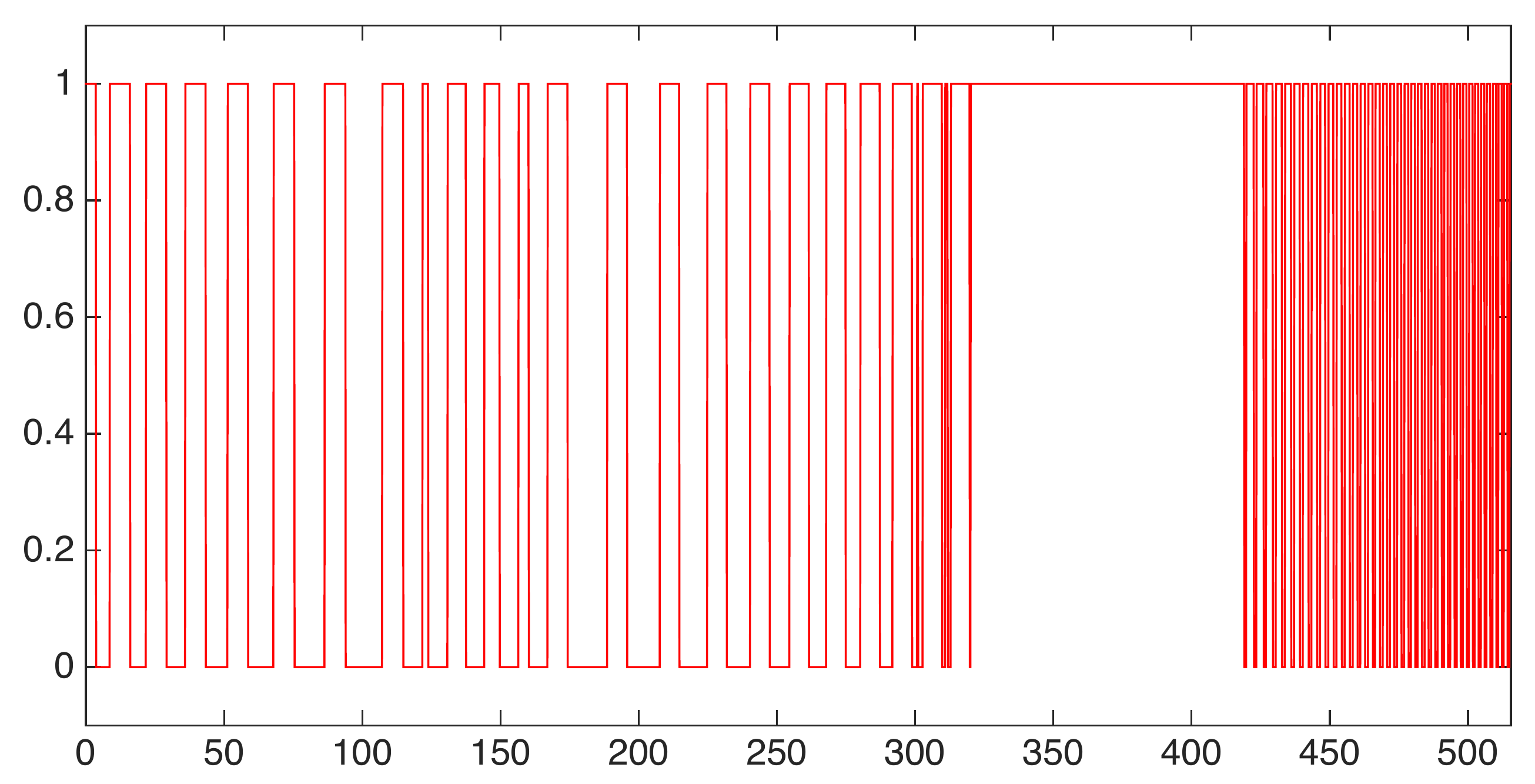}
\caption{Conjugate point test on the bang-bang $\mathrm{L}^1$-extremal extended to
$[0,3.5\,t_f]$. The value of the determinant of Jacobi fields (\ref{eq3.30})
along the extremal
is plotted against time on the upper left subgraph. The first conjugate point
occurs at $t_{1c} \simeq 475.93\ \text{hours} > t_f$;
optimality of the reference extremal on $[0,t_f]$ follows. 
On the upper right subgraph, a zoom is provided to show the jumps on the Jacobi fields
(then on their determinant) around the first conjugate time;
several jumps are observed, the first one leading to a sign change at the conjugate
time. Note that
in accordance with
Remark~\ref{rem4.1}, the determinant must be cons\-tant along the cost arcs ($\rho=0$)
provided the symplectic coordinates $x=(q,v)$ are used; this is not the case here as
the so called equinoctial elements \cite{cocv-2001}
are used for the state---hence the slight change in the determinant.
The bang-bang norm of the control, rescaled to belong to $[0,1]$ and
extended to $3.5\,t_f$, is portrayed on the lower graph.
On the extended time span, there are already more than $70$ switchings though the
thrust is just a medium one. For low thrusts, hundreds of switchings occur.} \label{fig4}
\end{figure}

\begin{figure}[t!] \centering
\includegraphics[width=\textwidth]{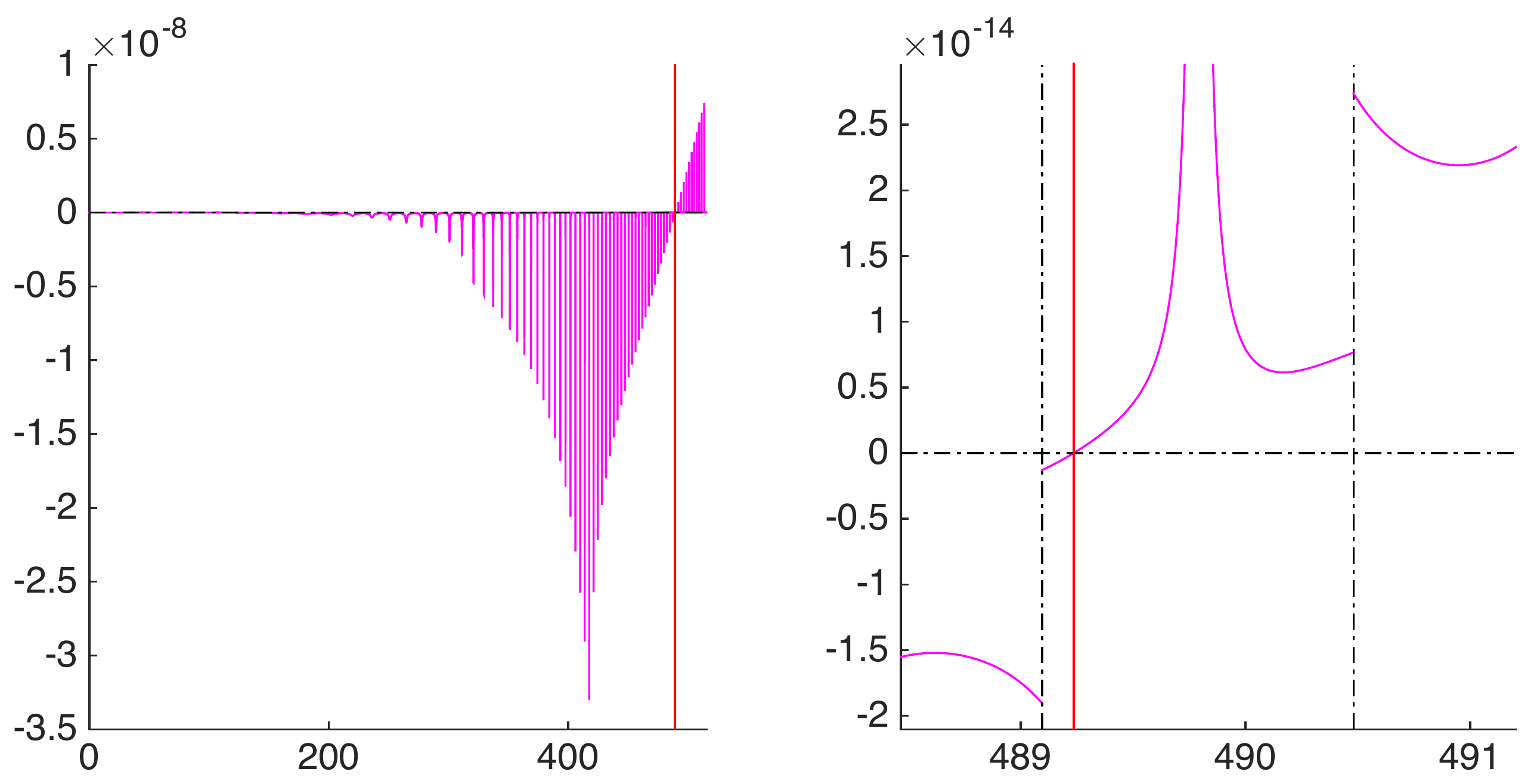}
\caption{Conjugate point test on a perturbed bang-bang $\mathrm{L}^1$-extremal extended to
$[0,3.5\,t_f]$. The value of the determinant of Jacobi fields (\ref{eq3.30})
along the extremal
is plotted against time (detail on the right subgraph). The endpoint conditions $x_0$,
$x_f$ given in
Tab.~\ref{tab1} are perturbed according to $x \leftarrow x+\Delta x$, $|\Delta x|
\simeq 1e-5$,
leading to conjugacy not at but between switching points---along a burn arc ($\rho=1$).
The first conjugate point
occurs at $t_{1c} \simeq 489.23\ \text{hours} > t_f$, ensuring again
optimality of the reference extremal on $[0,t_f]$.}
\label{fig5}
\end{figure}

\section*{Conclusion}
We have reviewed some of the particularities of $\L^1$-minimization in the control
setting. Among these, the existence of singular controls valued in the interior of the
Euclidean ball comes in strong contrast with the finite dimensional case. Moreover,
these singular extremals are at least of order two, entailing existence of
chattering \cite{zelikin-1994a}. By changing coordinates on the control, one can
reduce the system to a single control, namely the norm of the original one. This
emphasizes the role played by the Poisson structure of two Hamiltonians, the second
not the lift of a vector field; this fact accounts for the possibility of conjugacy
happening not necessarily at switching times,
as opposed to the simpler case of bang-bang controls valued in polyhedra.
Sufficient conditions for this type of extremals have been given; they
rely on a simple and numerically verifiable check on the discontinuous Jacobi fields of
the system. They are essentially equivalent to the no-fold conditions of
\cite{schattler-2002a}, formulated here in a Hamiltonian setting. The example of
$\L^1$-minimization for the three-dimensional two-body potential illustrates the
interest of the approach. Future work include the treatment of mass varying systems
(that is of maximization of the final mass) for more general problems such as the
restricted three-body one.

\appendix
\section{Sufficient conditions in the smooth case} \label{sa}
Consider the same minimization problem as in \S\ref{s3}. Suppose that
\begin{itemize}
\item[(B0)] The reference extremal is normal.
\end{itemize}
Having fixed $p^0$ to $-1$, we make the stronger assumption
that the maximized Hamiltonian is well defined and smooth, and set
\[ h(z) := \max_U H(z,\cdot),\quad z \in T^*X. \]

\begin{schlm} For almost all $t \in [0,t_f]$,
\[ h'(\bar{z}(t))=\frp{H}{z}(\bar{z}(t),\bar{u}(t)),\quad
   \nabla^2 h(\bar{z}(t))-\nabla^2_{zz}H(\bar{z}(t),\bar{u}(t)) \geq 0. \]
\end{schlm}
\begin{proof} For a.a.\ $t \in [0,t_f]$,
$h(\bar{z}(t))-H(\bar{z}(t),\bar{u}(t))=0$, while
\[ h(z)-H(z,\bar{u}(t)) \geq 0,\quad z \in T^*X, \]
by definition of $h$. Applying the first and second order necessary conditions for
optimality on $T^*X$ at $z=\bar{z}(t)$ gives the result.
\end{proof}

\noi We make the following assumption on the smooth reference extremal.
\enlargethispage{2ex}

\begin{itemize}
\item[(B1)] $\frpp{x}{p_0}(t,\bar{z}_0)$ is invertible for $t \in (0,t_f]$.
\end{itemize}

\begin{thrm} Under assumptions (B0)-(B1), the reference trajectory is
a $\CC^0$-local minimizer among all trajectories with same endpoints.
\end{thrm}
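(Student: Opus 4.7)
The plan is to adapt the five-step argument of the previous theorem, which simplifies considerably here because there is no switching surface and hence no jump on the Jacobi fields to accommodate. The maximized Hamiltonian $h$ is smooth on $T^*X$ by the standing assumption, and its Hamiltonian flow $e^{t\vec{h}}$ is well defined along the reference extremal; the Scholium provides the pointwise inequality $h(z) \geq H(z,u)$ with equality along $(\bar z(t), \bar u(t))$, which will play the role of the maximization condition (\ref{eq1}) from the broken case.

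First I would construct the field of extremals. Using (B1) together with the standard argument recalled in Step~1 of the previous proof, choose a Lagrangian submanifold $\LL_0 \subset T^*X$ transverse to $T^*_{x_0}X$ containing $\bar z_0$ and such that $\partial x/\partial z_0(t,\bar z_0)$ is invertible for every $t \in [0,t_f]$, where $z_0 \mapsto z(t,z_0) := e^{t\vec h}(z_0)$ denotes the maximized flow. Define
\[ \LL := \{(t,z) \in \R \times T^*X \ |\ (\exists z_0 \in \LL_0): t \in [0,t_f]\text{ s.t.\ }z = e^{t\vec h}(z_0)\}. \]
After restricting $\LL_0$ to a small enough neighbourhood of $\bar z_0$ and $t$ to a small neighbourhood of $[0,t_f]$, the projection $\Pi:(t,z)\mapsto (t,x)$ induces a diffeomorphism from $\LL$ onto its image, by the inverse function theorem applied at each $t$.

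Second, I would show that the Poincar\'e-Cartan form $\alpha := p\,\d x - h(z)\,\d t$ is exact on $\LL$. Exactly as in Step~3 of the broken-extremal proof, it suffices to show $\alpha$ is closed, for then any loop in $\LL$ retracts continuously via the flow to a loop in $\LL_0$ where $\alpha$ reduces to $p\,\d x$, which is exact because $\LL_0$ is Lagrangian. Closedness is a one-line computation: parameterize a tangent vector at $(t,z) \in \LL$ as $(\delta t, \vec h(z)\delta t + z'(t,z_0)\delta z_0)$ with $z_0 = e^{-t\vec h}(z)$, and use that $\exp(t\vec h)$ is symplectic so that $\d p \wedge \d x$ pulls back to its value on $\LL_0$, which vanishes.

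Third, I would compare the cost of an arbitrary admissible pair $(x,u):[0,t_f]\to X\times U$ with $x$ close to $\bar x$ in the $\CC^0$-topology. Assume first $x$ is of class $\CC^1$; its graph lifts uniquely to a curve $\gamma \subset \LL$ via $\Pi^{-1}$, giving an absolutely continuous $p$. Using the Scholium's inequality $h(x(t),p(t)) \geq H(x(t),p(t),u(t))$, one has
\begin{align*}
\int_0^{t_f} f^0(x(t),u(t))\,\d t &= \int_0^{t_f}\bigl(p(t)\dot x(t) - H(x(t),p(t),u(t))\bigr)\,\d t \\
 &\geq \int_0^{t_f}\bigl(p(t)\dot x(t) - h(x(t),p(t))\bigr)\,\d t = \int_\gamma \alpha.
\end{align*}
Since $\gamma$ and the lift $\bar\gamma$ of the reference trajectory share the same endpoints in $\LL$, and since $\alpha$ is exact on $\LL$, $\int_\gamma \alpha = \int_{\bar\gamma}\alpha$; along $\bar\gamma$, equality holds in the Scholium, so this integral equals the reference cost $\int_0^{t_f} f^0(\bar x(t),\bar u(t))\,\d t$. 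Finally, for a merely admissible pair with $x$ only absolutely continuous, the density argument of Step~5 of the broken-extremal proof carries over verbatim: approximate $x$ by a $\CC^1$ curve $\tilde x$ with the same endpoints in the $\W^{1,\iy}$-topology, and control the error term $\tilde p(t)(f(\tilde x(t),u(t)) - \dot{\tilde x}(t))$ by continuity of $\Pi^{-1}$.

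The argument is essentially routine Weierstrass field theory; the only subtle step is the construction of the Lagrangian $\LL_0$ from (B1), but this is exactly the construction needed (and already invoked) in Step~1 of the broken-extremal proof, hence presents no new obstacle.
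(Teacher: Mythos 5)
Your overall architecture --- build a field of extremals from a Lagrangian initial manifold, show the Poincar\'e--Cartan form $p\,\d x-h(z)\,\d t$ is exact on the resulting $\LL$, run the Weierstrass comparison using $h\geq H$, and finish with the $\W^{1,\iy}$ density argument --- is exactly the paper's plan, and those parts are fine. The problem is that you dispose of what you yourself call ``the only subtle step'' by a circular reference: Step~1 of the broken-extremal proof does not contain the construction of $\LL_0$, it explicitly defers it to this very appendix (``See appendix~\ref{sa}''). So the one piece of genuine content in this theorem --- turning the disconjugacy hypothesis (B1) into a Lagrangian manifold $\LL_0$ through $\bar z_0$ on which $\Pi$ is a diffeomorphism \emph{including at $t=0$} --- is missing from your proposal.

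Concretely, (B1) asserts invertibility of $\frpp{x}{p_0}(t,\bar z_0)$ for $t\in(0,t_f]$ only; at $t=0$ this matrix vanishes, so you cannot take $\LL_0=T^*_{x_0}X$ and ``apply the inverse function theorem at each $t$.'' One must tilt the initial fiber to $L_0=\{\delta x_0=S_0\delta p_0\}$ with $S_0>0$ small, and then prove that the corresponding $\delta x(t)$ stays invertible for \emph{small} $t>0$ before (B1) plus continuity can take over on $[\eta,t_f]$. The paper does this by introducing $S(t)=\delta\tilde x(t)\delta\tilde p(t)^{-1}$ (Jacobi fields conjugated by the fundamental solution $\Phi_t$ of the linearization along $(\bar z,\bar u)$), proving $S(t)$ symmetric and $\dot S(t)\geq 0$, where the monotonicity rests on the \emph{second-order} inequality $\nabla^2 h(\bar z(t))-\nabla^2_{zz}H(\bar z(t),\bar u(t))\geq 0$ of the Scholium --- a part of the Scholium you never invoke (you only use $h\geq H$, which is just the definition of $h$) --- together with the fact that $\Phi_t$ preserves the vertical distribution $\ker\pi'$. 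Without this monotone-Riccati argument there is no reason the tilted field does not fold back immediately near $t=0$, and the theorem is not proved. You should either reproduce this construction or give an independent argument for invertibility of $\frpp{x}{z_0}(t,\bar z_0)$ on all of $[0,t_f]$; as it stands the proposal has a genuine gap at its load-bearing step.
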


\noi Note that no Legendre type assumption is made, and that the disconjugacy condition
(B1) can be numerically verified (\eg, by a rank test while integrating the
variational system along the reference extremal).
For the sake of completeness, we provide a proof that essentially goes along the lines
of \cite[\S21]{agrachev-2004a}.

\begin{proof} For $S_0$ symmetric of order $n$, $L_0 := \{\delta x_0=S_0\delta p_0\}$
is a Lagrangian subspace of $T_{\bar{z}_0}(T^*X)$. Denote by
$\delta z=(\delta x,\delta p)$ the solution of the linearized system
\[ \delta\dot{z}(t)=\vec{h}'(\bar{z}(t))\delta z(t),\quad
   \delta z(0)=(S_0,I), \]
and set $\delta\tilde{z}(t)=(\delta\tilde{x}(t),\delta\tilde{p}(t))
:=\Phi_t^{-1}\delta z(t)$ where $\Phi_t$ is the fundamental
solution of the linearized system
\[ \dot{\Phi}_t=\frp{\vec{H}}{z}(\bar{z}(t),\bar{u}(t))\Phi_t,\quad \Phi_0=I. \]
As $\delta p(0)=\delta\tilde{p}(0)=I$,
\[ S(t):=\delta\tilde{x}(t)\delta\tilde{p}(t)^{-1} \]
is well defined for small enough $t \geq 0$. Since
\[ L_t := \exp(t\!\vec{h})'(\bar{z}(t))(L_0) \quad \text{and} \quad \Phi_t^{-1}(L_t) \]
are Lagrangian as images of $L_0$ through linear symplectic mappings, $S(t)$ must be
symmetric.

\begin{lmm} $\dot{S}(t) \geq 0$
\end{lmm}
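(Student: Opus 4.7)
The plan is to derive a matrix Riccati equation for $S$, in which positivity of $\dot S$ becomes a direct consequence of the positive semi-definiteness furnished by the Scholium. Set $A(t):=\nabla^2 h(\bar z(t))-\nabla^2_{zz}H(\bar z(t),\bar u(t))\geq 0$; recall also from the Scholium that $h'(\bar z(t))=\frpp{H}{z}(\bar z(t),\bar u(t))$ a.e., so the Hamiltonian vector fields of $h$ and of $H(\cdot,\bar u)$ agree along the reference extremal while their linearizations differ precisely by $JA(t)$.

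First I would compute the evolution of $\delta\tilde z=\Phi_t^{-1}\delta z$. Differentiating and using $\delta\dot z=J\nabla^2h(\bar z)\delta z$ together with $\dot\Phi_t=J\nabla^2_{zz}H(\bar z,\bar u)\Phi_t$ yields
\[ \delta\dot{\tilde z}=\Phi_t^{-1}JA(t)\Phi_t\,\delta\tilde z. \]
Since $\Phi_t$ is symplectic, $\t\Phi_t J\Phi_t=J$ and hence $\Phi_t^{-1}J=J\t\Phi_t$; substituting gives
\[ \delta\dot{\tilde z}=J\tilde A(t)\,\delta\tilde z,\qquad \tilde A(t):=\t\Phi_t\,A(t)\,\Phi_t\geq 0, \]
the inequality being a congruence of $A(t)\geq 0$. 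In other words, $\delta\tilde z$ obeys a linear Hamiltonian flow whose generating quadratic form is positive semi-definite.

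Next I would block-decompose $\tilde A=\begin{pmatrix}\tilde A_{11}&\tilde A_{12}\\ \t{\tilde A_{12}}&\tilde A_{22}\end{pmatrix}$, which turns the above ODE into
\[ \delta\dot{\tilde x}=\t{\tilde A_{12}}\,\delta\tilde x+\tilde A_{22}\,\delta\tilde p,\qquad \delta\dot{\tilde p}=-\tilde A_{11}\,\delta\tilde x-\tilde A_{12}\,\delta\tilde p. \]
Differentiating $S=\delta\tilde x\,\delta\tilde p^{-1}$ and substituting yields the matrix Riccati identity
\[ \dot S=\tilde A_{22}+\t{\tilde A_{12}}\,S+S\,\tilde A_{12}+S\,\tilde A_{11}\,S. \]
Using the symmetry of $S$ already established above, the right-hand side rewrites as the congruence
\[ \dot S=\begin{pmatrix}S&I\end{pmatrix}\tilde A(t)\begin{pmatrix}\t S\\ I\end{pmatrix}\geq 0. \]

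The main obstacle is a single observation: that the \emph{a priori} non-symmetric matrix $\Phi_t^{-1}JA\Phi_t$ is in fact of Hamiltonian form $J\tilde A$ with $\tilde A$ symmetric positive semi-definite, thanks to symplecticity of $\Phi_t$. Once this is in hand, the rest is routine---the block Riccati computation and its rewriting as a congruence of $\tilde A$---and the Scholium does the rest.
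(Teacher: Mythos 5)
Your proof is correct and follows essentially the same route as the paper's: both arguments hinge on the observation that $\delta\dot{\tilde z}=J\:\!\t\Phi_t A(t)\Phi_t\,\delta\tilde z$ with $\t\Phi_t A(t)\Phi_t\geq 0$ by congruence, the Scholium supplying $A(t)\geq 0$. The only difference is presentational---you write out the full matrix Riccati equation for $S$ and exhibit $\dot S$ as a congruence of $\tilde A$, whereas the paper evaluates $(\dot S\xi|\xi)$ against a fixed test vector via the symplectic form---but the content is identical.
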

\begin{proof} Let $t_1 \geq 0$ such that $S(t_1)$ is well defined, and let $\xi \in \R^n$.
Set
\[ \xi_0 := \delta\tilde{p}(t_1)^{-1}\xi \quad \text{and} \quad
   \delta\tilde{z}_1(t) := \delta\tilde{z}(t)\xi_0. \]
Then $\delta\tilde{z}_1(t_1)=(S(t_1)\xi,\xi)$, and
$\delta\tilde{x}_1(t)=S(t)\delta\tilde{p}_1(t)$. Differentiating the previous relation
and using $S(t)$ symmetry leads to
\[ (\dot{S}(t)\delta\tilde{p}_1(t)|\delta\tilde{p}_1(t)) =
   \omega(\delta\tilde{z}_1(t),\delta\dot{\tilde{z}}_1(t)). \]
Differentiating now
\[ \delta\tilde{z}_1(t) = \Phi_t^{-1}\delta z(t)\xi_0, \]
one gets
\[ \delta\dot{\tilde{z}}_1(t) = \Phi_t^{-1}
   ( \vec{h}'(\bar{z}(t))-\frp{\vec{H}}{z}(\bar{z}(t),\bar{u}(t)) ) \Phi_t
   \delta\tilde{z}_1(t) \]
\[ \ \ \ \ \ \ \ \ \ \ \ \ \ =
   J\t\Phi_t (\underbrace{\nabla^2 h(\bar{z}(t))-\nabla^2_{zz}H(\bar{z}(t),\bar{u}(t))}_%
     {\displaystyle \geq 0}) \Phi_t\delta\tilde{z}_1(t). \]
($J$ denotes the standard symplectic matrix.)
Evaluating at $t=t_1$, one eventually gets $(\dot{S}(t_1)\xi|\xi) \geq 0$.
\end{proof}

\noi For $S_0=0$, there is $\eta>0$ such that $S(t)$ is well defined on $[0,\eta]$, which
remains true for $S_0>0$, $|S_0|$ small enough. By the lemma before, $S_t>0$ on
$[0,\eta]$. In particular, it is an invertible matrix, which ensures that
$\Phi_t^{-1}(L_t)$ is transversal to $\ker\pi'(\bar{z}_0)$ ($\pi:T^*X \to X$ being the
canonical projection), that is $L_t$ is transversal to $\ker\pi'(\bar{z}(t))$ by virtue
of

\begin{schlm} $\Phi_t(\ker\pi'(\bar{z}_0)) = \ker\pi'(\bar{z}(t))$
\end{schlm}
\begin{proof} Note that in the linearized system defining $\Phi_t$,
\begin{eqnarray*}
  \delta\dot{x}(t) &=& \nabla^2_{xp}H(\bar{z}(t),\bar{u}(t))\delta x(t),\\
  \delta\dot{p}(t) &=&-\nabla^2_{xx}H(\bar{z}(t),\bar{u}(t))\delta x(t)
                      -\nabla^2_{px}H(\bar{z}(t),\bar{u}(t))\delta p(t),
\end{eqnarray*}
the equation on $\delta x$ is linear. Hence $\delta x(0)=0$ implies $\delta x \equiv 0$.
\end{proof}

\noi By restricting if necessary $|S_0|$, (B1) allows to assume that $\delta x(t)$
remains invertible for $t \in [\eta,t_f]$, so transversality of $L_t$ holds on
$[0,t_f]$. As a result, one can devise a Lagrangian submanifold $\LL_0$ of $T^*X$
whose tangent space at $\bar{z}_0$ is $L_0$; then
\[ \LL := \{(t,z) \in \R\times T^*X\ |\ (\exists z_0 \in \LL_0) : t \in
   (-\veps,t_f+\veps) \text{ s.t.\ } z=\exp(t\!\vec{h})(z_0) \} \]
is well defined for $\veps$ small enough, and such that $\Pi:\R\times T^*X \to \R\times
X$ induces a diffeomorphism from $\LL$ onto its image. One can moreover choose
$\LL_0$ such that $p\,\d x$ is not only closed but an exact form on it, in order that
the Poincar\'e-Cartan form $p\,\d x-h(z)\d t$ is exact on $\LL$. This, together with
assumption (B0), allows to conclude as
usual that the reference trajectory is optimal with respect to $\CC^0$-neighbouring
trajectories with same endpoints.\end{proof}

\end{document}